\documentclass[11pt]{article}

\usepackage[T1]{fontenc}
\usepackage[utf8]{inputenc}
\usepackage[margin=1in]{geometry}
\usepackage{lmodern}
\usepackage{amsmath,amssymb,amsthm,mathtools}
\usepackage{xurl}
\usepackage[colorlinks=true,linkcolor=blue,citecolor=blue,urlcolor=blue]{hyperref}

\newcommand{\CC}{\mathbb{C}}
\newcommand{\OO}{\mathcal{O}}
\newcommand{\HC}{\operatorname{HC}}
\newcommand{\HD}{\operatorname{HD}}

\newcommand{\soseven}{\mathfrak{so}_7}
\newcommand{\spsix}{\mathfrak{sp}_6}

\newtheorem{proposition}{Proposition}
\newtheorem{theorem}[proposition]{Theorem}
\newtheorem{conjecture}{Conjecture}
\theoremstyle{definition}
\newtheorem{definition}{Definition}
\theoremstyle{remark}
\newtheorem{remark}{Remark}
\newenvironment{sourceproof}
  {\begingroup\begin{proof}[Source]}
  {\end{proof}\endgroup}

\title{Super-Chevalley Restriction and Relative Lie Algebra Cohomology\\
over the \texorpdfstring{$2|3$}{2|3} Algebra}
\author{%
Chi-Ming Chang$^{a,b}$\\[0.8em]
{\small $^a$ \textit{Yau Mathematical Sciences Center (YMSC), Tsinghua University, Beijing, China}}\\[0.25em]
{\small $^b$ \textit{Beijing Institute of Mathematical Sciences and Applications (BIMSA), Beijing, China}}\\[0.45em]
{\small \texttt{cmchang@tsinghua.edu.cn}}}
\date{}

\begin{document}
\maketitle

\begin{abstract}
Let
\[
A:=\CC[z_+,z_-]\otimes \Lambda(\theta_1,\theta_2,\theta_3).
\]
This algebra is supercommutative, with \(z_\pm\) even and
\(\theta_1,\theta_2,\theta_3\) odd. For a reductive Lie algebra
\(\mathfrak g\), let \(\mathfrak g[A]:=\mathfrak g\otimes A\) denote the
corresponding current Lie superalgebra. Motivated by the Chang--Yin
description of weak-coupling \(1/16\)-BPS cohomology in
\(\mathcal N=4\) super-Yang--Mills, we study the relative Lie algebra
cohomology
\[
H^\bullet(\mathfrak g[A],\mathfrak g;\CC).
\]
For this algebra, we isolate three finite-rank phenomena. First, the natural
super-commuting restriction map \(\operatorname{res}_{\mathfrak g,3|2}\),
viewed as a super analogue of Chevalley restriction and its commuting-scheme
variants, already fails to be an isomorphism for
\(\mathfrak g=\mathfrak{so}_7\); the obstruction is a non-Cartan class.
Second, the same algebra \(A\) produces explicit fortuitous classes for
\(\mathfrak{sl}_2\) and \(\mathfrak{so}_7\), giving concrete counterexamples to
naive stable-image expectations suggested by the type-\(A\)
Loday--Quillen--Tsygan theorem and its current-algebra refinements. Third, the
classical relative cohomologies for the Langlands-dual pair
\((\mathfrak{so}_7,\mathfrak{sp}_6)\) are not isomorphic. We then record the
conjectural quantum deformation of the differential expected to restore
duality, together with first-order evidence pairing the fortuitous and
non-Cartan \(\mathfrak{so}_7\) classes.
\end{abstract}
\vfill
\noindent{\footnotesize\emph{Disclosure.} Portions of the exposition and
\LaTeX\ editing were assisted by OpenAI Codex. All mathematical claims,
proofs, references, and editorial decisions were verified by the author.\par}
\newpage

\tableofcontents

\paragraph{Standing conventions.}
All algebras are over \(\CC\). Throughout, \(\mathfrak g\) denotes a
reductive Lie algebra. When \(G\) appears, it denotes a connected reductive
group with Lie algebra \(\mathfrak g\); \(\mathfrak t\subset \mathfrak g\)
denotes a Cartan subalgebra, \(W\) the Weyl group, and
\({}^L\!\mathfrak g\) the Langlands-dual Lie algebra.

If \(B\) is a commutative or supercommutative algebra, then
\(\mathfrak g[B]:=\mathfrak g\otimes B\) denotes the corresponding current Lie
algebra or Lie superalgebra. If \(B=\CC\oplus B_+\), then \(B_+\) denotes the
augmentation ideal. The symbol \(\Pi\) denotes parity reversal. If an affine
(super)scheme \(X\) carries an action of a group \(\Gamma\), then
\(\OO(X)^\Gamma\) denotes the ring of \(\Gamma\)-invariant regular functions
on \(X\). Relative cohomology \(H^\bullet(\mathfrak g[B],\mathfrak g;\CC)\)
always means relative Chevalley--Eilenberg cohomology with trivial
coefficients \(\CC\). When a graded Hopf algebra structure is present,
\(\operatorname{Prim}\) denotes its primitive part. When \(B\) is
\(\mathbb Z/2\)-graded, all symmetric and exterior constructions are
understood in the graded-commutative sense.

\section{Introduction}\label{sec:introduction}

This paper studies the supercommutative algebra
\begin{equation}\label{eq:A23}
A:=\CC[z_+,z_-]\otimes \Lambda(\theta_1,\theta_2,\theta_3)
\end{equation}
and the relative Lie algebra cohomology
\begin{equation}\label{eq:relative-main}
H^\bullet(\mathfrak g[A],\mathfrak g;\CC).
\end{equation}
In the gauge-theoretic setting of Chang--Yin~\cite{CY13}, this relative
cohomology computes the weak-coupling \(Q\)-cohomology of \(1/16\)-BPS
operators in \(\mathcal N=4\) super-Yang--Mills. The aim is to isolate
several finite-rank mathematical consequences of recent high-energy theory
results using the language of Lie algebra cohomology and invariant theory.
We begin by recalling two standard inputs that motivate the later questions.

The first input is the classical Chevalley restriction theorem, which
identifies invariant polynomial functions on \(\mathfrak g\) with
\(W\)-invariant polynomial functions on a Cartan subalgebra \(\mathfrak t\):
\[
\OO(\mathfrak g)^G\xrightarrow{\sim}\OO(\mathfrak t)^W.
\]
For \(d\ge 1\), one may replace a single element of \(\mathfrak g\) by a
commuting \(d\)-tuple and ask for the analogous restriction map
\[
\OO(\mathcal C_{\mathfrak g}^d)^G\longrightarrow \OO(\mathfrak t^{\oplus d})^W,
\qquad
\mathcal C_{\mathfrak g}^d:=\{(x_1,\dots,x_d)\in \mathfrak g^{\oplus d}\mid [x_i,x_j]=0\}.
\]
Here the set-theoretic notation is shorthand for the commuting scheme cut out
by the equations \([x_i,x_j]=0\). For the classical groups, this
commuting-scheme restriction map is known to be an isomorphism in broad
generality: Vaccarino~\cite{Vac07} proves the type \(A\) case,
Chen--Ngo~\cite{CN21} the symplectic case, and
Song--Xia--Xu~\cite{SXX23} the orthogonal case. The basic geometry of
commuting varieties goes back to Richardson~\cite{Ric79}. For \(d=2\), earlier
results of Joseph~\cite{Jos97} and Etingof--Ginzburg~\cite{EG02} identify the
reduced invariant-function ring for commuting pairs, and Gan--Ginzburg~\cite{GG06}
establish the type-\(A\) case by almost-commuting methods.

From a derived point of view,
Berest--Felder--Patotski--Ramadoss--Willwacher~\cite[Theorem~6.7 and Proposition~8.2]{BFPRW17}
identify the DG algebra of
\(G\)-invariant functions on the derived commuting scheme for a two-dimensional
abelian Lie algebra with a relative Chevalley--Eilenberg DG algebra, and they
formulate the corresponding derived Harish--Chandra map. More recently,
Li--Nadler--Yun~\cite[Theorem~1.4.10, Theorem~1.4.11, and Corollary~6.3.5]{LNY24}
obtain, conditional on a spectral Ansatz, the ordinary \(d=2\)
invariant-function formulas for both group and Lie algebra commuting schemes,
together with a description of the DG algebra of derived functions on the
derived commuting stack via Langlands duality. The present \(3|2\)
super-commuting restriction problem should be compared with these ordinary
two-variable classical and derived pictures.

A second input is the stable Loday--Quillen--Tsygan theorem, which identifies
the stable homology of \(\mathfrak{gl}_\infty(B)\) with cyclic homology:
\[
H_\bullet(\mathfrak{gl}_\infty(B))
\cong
\operatorname{Sym}\!\bigl(\HC_\bullet(B)[-1]\bigr).
\]
Equivalently, the primitive part of the Hopf algebra
\(H_\bullet(\mathfrak{gl}_\infty(B))\) is identified with
\(\HC_\bullet(B)[-1]\). After dualizing, this explains why cyclic cohomology
appears as the stable source in the finite-rank picture discussed next.

Teleman's cohomological packaging of Feigin's current-algebra construction
suggests the following finite-rank picture. For a general reductive
\(\mathfrak g\) of rank \(\ell\), with exponents \(m_1,\dots,m_\ell\), one
has the map
\begin{equation}\label{eq:intro-psi}
\Psi^*:
\operatorname{Sym}\!\left(\bigoplus_{j=1}^{\ell}\HC^{\,\bullet}_{(m_j)}(B)[-1]\right)
\longrightarrow
H^\bullet(\mathfrak g[B]).
\end{equation}
In this language, the Loday--Feigin conjectural picture predicts that
\(\Psi^*\) should be surjective, and in Loday's stronger form one asks that it
be an isomorphism.\footnote{Teleman~\cite{Tel03} gives this cohomological
packaging, distinguishes Loday's stronger isomorphism statement from Feigin's
weaker smooth-case surjectivity question, and explains that the stronger
statement already fails for \(\mathfrak{sl}_2[x,y]\), while the weaker
smooth-case surjectivity question remains open there.}

For the counterexamples below, it is useful to consider the slightly broader
stable-image expectation
\begin{equation}\label{eq:intro-stable}
H^\bullet(\mathfrak g[B])
\stackrel{?}{=}
\operatorname{im}\!\left(
\operatorname{Sym}\!\bigl(\HC^{\,\bullet}(B)[-1]\bigr)
\longrightarrow
H^\bullet(\mathfrak g[B])
\right).
\end{equation}
For \(\mathfrak g=\mathfrak{gl}_n\), the source should be viewed as the
stable cohomology \(H^\bullet(\mathfrak{gl}_\infty(B))\) rewritten via
Loday--Quillen--Tsygan. Since the source of~\eqref{eq:intro-psi} is a
distinguished graded subalgebra of the larger stable source
in~\eqref{eq:intro-stable}, surjectivity of~\eqref{eq:intro-psi} would
imply~\eqref{eq:intro-stable}.

Against this background, the present paper records three mathematical
consequences of recent high-energy theory results.
\begin{enumerate}
\item The natural \(3|2\) super-commuting analogue of the commuting-scheme
restriction problem already fails for \(\mathfrak g=\mathfrak{so}_7\); the
obstruction is the non-Cartan class.
\item The same algebra \(A\) produces explicit fortuitous classes, first
for \(\mathfrak{sl}_2\) and then for \(\mathfrak{so}_7\), giving concrete
finite-rank counterexamples to naive Loday--Feigin stable-image expectations.
\item The classical relative cohomologies for the Langlands-dual pair
\((\mathfrak{so}_7,\mathfrak{sp}_6)\) are not isomorphic, but recent work
suggests a quantum deformation of the differential for which duality may be
restored; at first order, the fortuitous \(\mathfrak{so}_7\) class maps to the
non-Cartan class.
\end{enumerate}

Here and below, the stable source for a classical series means its
infinite-rank relative cohomology. It is cyclic in type \(A\) by
Loday--Quillen--Tsygan, and dihedral for the orthogonal and symplectic
families; the derivative-multigraded refinement used below is explained in
Remark~\ref{rem:osp-dihedral-caveat}. Several
numbered results are quoted consequences of the cited papers and are followed
only by a brief source note recording provenance.

Section~\ref{sec:super-commuting} records the super-commuting schemes needed
for the first theme. Section~\ref{sec:bps-superfields} introduces the \(2|3\)
\(Q\)-complex in purely algebraic terms. Section~\ref{sec:stable-classes}
treats the stable sector and the Loday--Feigin side. Finally,
Section~\ref{sec:langlands-quantum} returns to the \(3|2\) counterexample
and the Langlands-duality and quantum-deformation story.

\section{Super-commuting \texorpdfstring{$d_B|d_F$}{dB|dF}-tuples}\label{sec:super-commuting}

We record the super analogue of the commuting-scheme restriction problem
in a form independent of the \(2|3\) specialization used below. For integers
\(d_B,d_F\ge 0\), set
\[
\mathfrak g^{d_B|d_F}:=\mathfrak g^{\oplus d_B}\oplus \Pi(\mathfrak g^{\oplus d_F}),
\]
equipped with the diagonal adjoint action of \(G\), where \(\Pi\) denotes
parity reversal. Its coordinate superalgebra is
\[
\OO(\mathfrak g^{d_B|d_F})
=
\operatorname{Sym}\!\bigl((\mathfrak g^*)^{\oplus d_B}\bigr)\otimes
\Lambda\!\bigl((\mathfrak g^*)^{\oplus d_F}\bigr).
\]
A point of the ambient superspace is written as
\[
(x_1,\dots,x_{d_B};\psi_1,\dots,\psi_{d_F}),
\]
with the \(x_i\) even and the \(\psi_a\) odd. The super-commuting scheme
\[
\mathcal C_{\mathfrak g}^{d_B|d_F}\subset \mathfrak g^{d_B|d_F}
\]
is the closed subsuperscheme defined by the equations
\[
[x_i,x_j],\qquad [x_i,\psi_a],\qquad [\psi_a,\psi_b].
\]
Equivalently, its defining ideal is generated by the coordinate functions
\[
\lambda([x_i,x_j]),\qquad
\lambda([x_i,\psi_a]),\qquad
\lambda([\psi_a,\psi_b]),
\qquad \lambda\in \mathfrak g^*.
\]
Since \(\CC\)-points do not detect the odd directions, it is useful to restate
the definition functorially: for every supercommutative algebra \(R\),
\[
\mathcal C_{\mathfrak g}^{d_B|d_F}(R)
:=
\left\{(x_i;\psi_a)\in
(\mathfrak g\otimes R)_0^{d_B}\times (\mathfrak g\otimes R)_1^{d_F}
\;\middle|\;
[x_i,x_j]=[x_i,\psi_a]=[\psi_a,\psi_b]=0\right\}.
\]
Here the bracket is the superbracket on \(\mathfrak g\otimes R\) induced by
the Lie bracket on \(\mathfrak g\) and multiplication in \(R\). The two
descriptions agree: an \(R\)-point of \(\mathfrak g^{d_B|d_F}\) factors
through \(\mathcal C_{\mathfrak g}^{d_B|d_F}\) exactly when the pullbacks of
the defining equations vanish, equivalently when the three families of
brackets above vanish in \(\mathfrak g\otimes R\).

The Cartan-side superspace is
\[
\mathfrak t^{d_B|d_F}:=\mathfrak t^{\oplus d_B}\oplus \Pi(\mathfrak t^{\oplus d_F})
\subset
\mathcal C_{\mathfrak g}^{d_B|d_F}.
\]
Since \(\mathfrak t\) is abelian, restriction along the inclusion gives a
natural map
\[
\operatorname{res}_{\mathfrak g,d_B|d_F}:
\OO(\mathcal C_{\mathfrak g}^{d_B|d_F})^G
\longrightarrow
\OO(\mathfrak t^{d_B|d_F})^W.
\]
The naive super-Chevalley restriction statement is that
\(\operatorname{res}_{\mathfrak g,d_B|d_F}\) should be an isomorphism. The
only case needed later is \((d_B,d_F)=(3,2)\), for which we abbreviate
\[
\mathcal C_{\mathfrak g}^{3|2},
\qquad
\mathfrak t^{3|2},
\qquad
\operatorname{res}_{\mathfrak g,3|2}.
\]

\begin{definition}[Non-Cartan class]
For any \(d_B,d_F\), a nonzero class in the kernel of
\[
\operatorname{res}_{\mathfrak g,d_B|d_F}:
\OO(\mathcal C_{\mathfrak g}^{d_B|d_F})^G
\longrightarrow
\OO(\mathfrak t^{d_B|d_F})^W
\]
will be called a \emph{non-Cartan class}.
\end{definition}

\begin{theorem}\label{prop:super-chevalley}
For \((d_B,d_F)=(3,2)\) and \(\mathfrak g=\soseven\), there exist nontrivial
non-Cartan classes. In particular, the map
\(\operatorname{res}_{\mathfrak g,3|2}\) is not an isomorphism.
\end{theorem}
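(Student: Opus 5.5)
The approach is to exhibit one explicit \(G\)-invariant function \(f\) on \(\mathcal C_{\soseven}^{3|2}\) that restricts to zero on \(\mathfrak t^{3|2}\) and does not lie in the defining ideal. Such an \(f\) is a non-Cartan class, so \(\operatorname{res}_{\soseven,3|2}\) is not injective and hence not an isomorphism.

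The natural source for \(f\) is the \(\soseven\)-specific invariant tensor: the \(G_2\)-invariant \(3\)-form \(\phi\in\Lambda^3(\CC^7)^*\). It is not \(SO_7\)-invariant. However, the \(SO_7\)-invariant object is the spin representation \(S\) of dimension \(8\), and from \(S\) one can build invariants with no Cartan analogue. Concretely, I would search in a fixed multidegree in \((x_1,x_2,x_3;\psi_1,\psi_2)\) for invariant polynomials that are antisymmetric in a way forced to vanish on \(\mathfrak t^{3|2}\). On \(\mathfrak t\), all \(x_i\) and \(\psi_a\) lie in a common abelian subspace of rank \(3\). Any polynomial that is alternating in more than three \(\mathfrak t\)-valued arguments therefore vanishes. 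For example, use the cubic \(\mathfrak{so}_7\)-structure pairing \(\Lambda^3\mathfrak g\to\CC\) given by \((a,b,c)\mapsto\operatorname{tr}(a[b,c])\). Combine it with the two odd \(\psi\)'s, whose symmetric products are graded-alternating, so as to obtain a polynomial alternating in four entries drawn from \(x_1,x_2,x_3,\psi_1,\psi_2\). Such a polynomial restricts to zero on \(\mathfrak t^{3|2}\).

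First step: fix the bidegree suggested by the paper's later BPS analysis and write down the candidate \(f\) as an explicit contraction of Killing form, structure constants, and Pfaffian-type invariants of \(\soseven\). Second step: verify that \(f\) vanishes on \(\mathfrak t^{3|2}\), using rank \(3\) and antisymmetry, or a \(W\)-invariant-theory argument. Third, and hardest, step: show \(f\notin I\), the ideal generated by \(\lambda([x_i,x_j])\), \(\lambda([x_i,\psi_a])\), \(\lambda([\psi_a,\psi_b])\). Since everything is multigraded and \(G\)-equivariant, this reduces to a finite-dimensional linear algebra problem in one multidegree. One must show the invariant part of \(I\) in that degree is strictly smaller than \(\OO(\mathfrak g^{3|2})^G\) in that degree, modulo the restriction-to-\(\mathfrak t\) image. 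I would attack it by computing dimensions via characters: compute \(\dim \OO(\mathfrak g^{3|2})^G_{\mathbf d}\) by Weyl integration, bound \(\dim(I^G)_{\mathbf d}\) using the Koszul-type resolution in low degree, and compare with \(\dim\OO(\mathfrak t^{3|2})^W_{\mathbf d}\). A strict inequality gives a kernel element. Failing a clean character argument, I would evaluate \(f\) on a carefully chosen non-toral super-commuting point over a Grassmann algebra \(R\) and check \(f\neq0\) there. Candidates are a nilpotent commuting triple together with odd nilpotent \(\psi_a\), built from \(\soseven\)'s principal \(\mathfrak{sl}_2\) or from the \(G_2\) decomposition \(\soseven=\mathfrak g_2\oplus\CC^7\). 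This witnesses \(f\notin I\) directly.

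The main obstacle is this non-membership and nonvanishing check. Vanishing on \(\mathfrak t\) is easy, but one must produce an honest \(R\)-point of \(\mathcal C^{3|2}\) away from the Cartan locus on which \(f\) survives, and this requires a concrete, computer-checkable construction specific to \(B_3\). This is consistent with the failure appearing first at \(\soseven\) rather than in type \(A\).
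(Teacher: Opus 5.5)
Your framing is correct: a non-Cartan class is by definition a $G$-invariant $f$ with $f|_{\mathfrak t^{3|2}}=0$ and $f\notin I$. But the proposal never produces such an $f$, and the mechanism you suggest cannot produce one.

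\begin{itemize}
\item As soon as two of the three slots of $(a,b,c)\mapsto\operatorname{tr}(a[b,c])$ are filled by tuple entries, invariance of the form rewrites the expression through $[x_i,x_j]$, $[x_i,\psi_a]$ or $[\psi_a,\psi_b]$. These are generators of $I$, so the expression is already zero in $\OO(\mathcal C^{3|2}_{\soseven})^G$.
\item An invariant of degree four that is alternating in all four slots vanishes, since $(\Lambda^4\mathfrak g^*)^G\cong H^4(\soseven)=0$.
\item The spin representation, the $G_2$ form and $\epsilon$-tensors add nothing. $\mathrm{Spin}_7$ and $O_7$ act on $\mathfrak g^{\otimes k}$ through $SO_7$, and the invariants there are spanned by products of traces of words in the vector representation.
\end{itemize}

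The class the paper uses is of a different kind. It lies in multidegree $(1,1,2,2,2)$, so each odd coordinate appears only once. Its vanishing on $\mathfrak t^{3|2}$ is not an antisymmetry effect. Write $x_i=\sum_s X_{si}h_s$ in a trace-orthonormal basis $(h_s)$ of $\mathfrak t$, and let $\beta(u,v)$ be the bracketed factor of~\eqref{eq:non-cartan-draft} with $(\Psi_i,\Psi_l)$ replaced by $(u,v)$. Then $\Xi_{nc}^{\soseven}|_{\mathfrak t^{3|2}}=2\det(X)^2\sum_s\beta(h_s,h_s)$. The factor $\det(X)^2$ is nonzero. It is the sum that vanishes, by the identity $3-1-4\cdot\tfrac12=0$, which comes from $\sum_s h_s^2=\tfrac12$ on the six nonzero weight lines of $\CC^7$.

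The decisive step, $f\notin I$, is deferred, and neither of your routes settles it as stated.
\begin{itemize}
\item Weyl integration computes $\dim\OO(\mathfrak g^{3|2})^G_{\mathbf n}$. Koszul-type counts for $I$, however, only give Euler characteristics (index-type information), because the commuting equations are far from a regular sequence. What is needed is the actual rank of the incoming differential.
\item Your witness points cannot work. Suppose every entry of an $R$-point $P$ lies in $\mathfrak b\otimes R$ for one Borel $\mathfrak b=\mathfrak h\oplus\mathfrak n$. Contracting by a regular dominant cocharacter gives $f(P)=f(\pi_{\mathfrak h}P)$ for every invariant $f$. Since $\pi_{\mathfrak h}P$ is an $R$-point of a Cartan superspace, a non-Cartan class vanishes there. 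Points assembled from commuting matrices with odd scalar coefficients (for instance nilpotent ones from the principal $\mathfrak{sl}_2$) lie in a single Borel by Lie's theorem, so they can never detect such a class.
\end{itemize}

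The paper does not argue geometrically at all. By Proposition~\ref{prop:top-degree-koszul}, $\OO(\mathcal C^{3|2}_{\mathfrak g})^G_{\mathbf n}$ is the top-degree group $H^{|\mathbf n|;\mathbf n}(\mathfrak g[A],\mathfrak g;\CC)$. So $f\notin I$ means exactly that $f$ is not in the image of $Q:\mathcal W^{7;\mathbf n}\to\mathcal W^{8;\mathbf n}$ at $\mathbf n=(1,1,2,2,2)$. Chang--Lin's level-by-level cohomology computation (their Table~1) supplies this rank information, and Choi--Lee's explicit representative supplies the Cartan vanishing. Those two inputs are what your outline is missing: the correct multidegree with an explicit candidate, and an honest non-exactness computation in that multidegree.
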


\begin{sourceproof}
This statement is proved by Chang--Lin~\cite[\S 3 and Table~1]{CL25}. In the
terminology of the present paper, their argument detects a nonzero class
killed by the restriction map \(\operatorname{res}_{\mathfrak g,3|2}\), hence a
non-Cartan class.
\end{sourceproof}

\begin{remark}
Section~\ref{subsec:classical-mismatch-level18} returns to
Theorem~\ref{prop:super-chevalley} by recording an explicit representative
\(\Xi_{nc}^{\mathfrak{so}_7}\) for the non-Cartan class detected
in~\cite{CL25}. This is the class later denoted by
\([\Xi_{nc}^{\mathfrak{so}_7}]\).
\end{remark}

\section{BPS superfields, BPS words, and the supercharge}\label{sec:bps-superfields}

Fix the \(2|3\) supercommutative algebra
\[
A=\CC[z_+,z_-]\otimes \Lambda(\theta_1,\theta_2,\theta_3),
\]
and the augmentation ideal
\[
A_+:=(z_+,z_-,\theta_1,\theta_2,\theta_3)\subset A.
\]
We rewrite the \(Q\)-complex of BPS words in purely algebraic terms.
The terminology is borrowed from the high-energy theory literature, but the
constructions themselves involve only the relative current algebra
\(\mathfrak g[A]\). By a \emph{BPS superfield} we mean an odd formal variable
valued in \(\mathfrak g\otimes A_+\), equivalently an element of the
parity-reversed superspace \(\Pi(\mathfrak g\otimes A_+)\). Thus we write
\[
\Psi(z,\theta)\in \Pi(\mathfrak g\otimes A_+) \subset \Pi(\mathfrak g[A]),
\qquad
\Psi|_{z=\theta=0}=0.
\]
The vanishing at the origin is exactly the statement that only the augmentation
ideal \(A_+\) appears.
The space of \emph{BPS words} of length \(p\) is
\[
\mathcal W^p(\mathfrak g,A)
:=
\operatorname{Hom}_{\mathfrak g}\!\bigl(\Lambda^p(\mathfrak g\otimes A_+),\CC\bigr).
\]
For \(c\in \mathcal W^p(\mathfrak g,A)\), write
\[
\mathcal O_c:=\frac{1}{p!}\,c(\Psi,\dots,\Psi).
\]
\begin{proposition}\label{prop:bps-words-relative}
For every \(p\ge 0\), there is a canonical isomorphism
\begin{equation}\label{eq:phi-relative}
\Phi:
\mathcal W^p(\mathfrak g,A)\xrightarrow{\sim} C^p(\mathfrak g[A],\mathfrak g;\CC).
\end{equation}
\end{proposition}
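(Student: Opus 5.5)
The plan is to write out the relative Chevalley--Eilenberg cochains explicitly and then match them with BPS words through the splitting \(A=\CC\oplus A_+\). By definition, with trivial coefficients \(\CC\),
\[
C^p(\mathfrak g[A],\mathfrak g;\CC)
=
\operatorname{Hom}_{\mathfrak g}\!\bigl(\Lambda^p(\mathfrak g[A]/\mathfrak g),\CC\bigr).
\]
These are the cochains \(c\in\operatorname{Hom}(\Lambda^p\mathfrak g[A],\CC)\) satisfying two conditions: they are \(\mathfrak g\)-basic, meaning \(\iota_x c=0\) for \(x\in\mathfrak g\), and they are \(\mathfrak g\)-invariant, meaning \(L_x c=0\). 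Throughout, \(\Lambda^p\) is taken in the graded sense of the standing conventions. The odd generators \(\theta_i\) make \(\mathfrak g\otimes\theta_i A\) odd, so \(\Lambda^p\) is symmetric on those directions. This is exactly why \(\Psi\) is declared odd in the parity-reversed convention. The inclusion \(\mathfrak g=\mathfrak g\otimes 1\subset\mathfrak g[A]\) is the subalgebra we quotient by.

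The key step is the isomorphism of \(\mathfrak g\)-modules
\[
\mathfrak g[A]=\mathfrak g\otimes(\CC\cdot 1\oplus A_+)=\mathfrak g\oplus(\mathfrak g\otimes A_+).
\]
The splitting \(A=\CC\oplus A_+\) is canonical, since \(A_+\) is the augmentation ideal. It is also \(\mathfrak g\)-stable, because \(\mathfrak g\) acts only on the first tensor factor. Hence the quotient map restricts to a \(\mathfrak g\)-equivariant isomorphism \(\mathfrak g\otimes A_+\xrightarrow{\sim}\mathfrak g[A]/\mathfrak g\). Applying \(\Lambda^p\) and then \(\operatorname{Hom}_{\mathfrak g}(-,\CC)\) gives \(\Phi\). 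Concretely, \(\Phi(c)\) is the cochain on \(\Lambda^p\mathfrak g[A]\) obtained by precomposing \(c\) with the projection \(\mathfrak g[A]\to\mathfrak g\otimes A_+\), which kills \(\mathfrak g\otimes 1\). This cochain vanishes whenever an argument lies in \(\mathfrak g\), so it is basic. It is invariant because \(c\) is and the projection is equivariant. The inverse map is restriction to \(\Lambda^p(\mathfrak g\otimes A_+)\). For \(p=0\), both sides are \(\CC\).

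The only point needing care, which I expect to be the main (mild) obstacle, is checking that the sign and parity conventions line up. First, the graded \(\Lambda^p\) of \(\mathfrak g\otimes A_+\) must match the graded \(\Lambda^p\) of the quotient \(\mathfrak g[A]/\mathfrak g\). This holds because the isomorphism preserves parity. Second, \(\mathcal O_c=\frac1{p!}c(\Psi,\dots,\Psi)\) must recover \(c\). Identifying \(\Lambda^p(\mathfrak g\otimes A_+)^*\) with degree-\(p\) polynomial functions on \(\Pi(\mathfrak g\otimes A_+)\) (the standard décalage \(\Lambda^p V\cong\operatorname{Sym}^p(\Pi V)\) up to sign), the assignment \(c\mapsto\mathcal O_c\) is the usual polarization bijection over \(\CC\), where \(p!\) is invertible. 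So \(\Phi\) can equivalently be phrased as \(\mathcal O_c\mapsto\) the relative cochain, and canonicity follows since no choices were made beyond the augmentation.
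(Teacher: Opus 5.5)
Your proposal is correct and follows the paper's proof. The canonical $\mathfrak g$-stable splitting $\mathfrak g[A]=\mathfrak g\oplus(\mathfrak g\otimes A_+)$ identifies $\mathfrak g[A]/\mathfrak g$ with $\mathfrak g\otimes A_+$, and applying $\operatorname{Hom}_{\mathfrak g}(\Lambda^p(-),\CC)$ gives $\Phi$; your basic-cochain description and explicit inverse just spell out what the paper calls immediate. (A small aside that does not affect the argument: $\mathfrak g\otimes\theta_iA$ is not purely odd, since e.g.\ $\theta_i\theta_j$ is even, and the oddness of $\Psi$ comes from the Chevalley--Eilenberg parity shift rather than from the $\theta_i$.)
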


\begin{proof}
This is immediate from the definition
\[
C^p(\mathfrak g[A],\mathfrak g;\CC)
=
\operatorname{Hom}_{\mathfrak g}\!\bigl(\Lambda^p(\mathfrak g[A]/\mathfrak g),\CC\bigr),
\]
together with the canonical identification
\(\mathfrak g[A]/\mathfrak g\cong \mathfrak g\otimes A_+\) coming from
\(A=\CC\oplus A_+\). \qedhere
\end{proof}

Thus BPS words are relative cochains written in superfield notation. We next
identify the BPS differential \(Q\) with the relative Chevalley--Eilenberg
differential. Define the \emph{supercharge} \(Q\) by
\begin{equation}\label{eq:q-psi-main}
Q\Psi(z,\theta)=\Psi(z,\theta)^2=\frac12\{\Psi(z,\theta),\Psi(z,\theta)\},
\end{equation}
and extend it to products by the graded Leibniz rule
\[
Q(XY)=Q(X)Y+(-1)^{|X|}XQ(Y),
\]
where \(|X|\in \mathbb Z/2\) denotes the parity of the homogeneous element \(X\).
To check signs, choose a homogeneous basis \(\{\tau_\alpha\}\) of
\(\mathfrak g\otimes A_+\), and let \(\{\xi^\alpha\}\) denote the corresponding
parity-shifted Chevalley--Eilenberg generators, so that
\(|\xi^\alpha|=|\tau_\alpha|+1\). Equivalently, the \(\xi^\alpha\) are
coordinates on \(\Pi(\mathfrak g\otimes A_+)\). Write
\[
[\tau_\alpha,\tau_\beta]=f_{\alpha\beta}{}^\gamma\tau_\gamma,
\qquad
\Psi=\sum_\alpha \Psi^\alpha\tau_\alpha,
\qquad
\Psi^\alpha=\xi^\alpha(\Psi).
\]
The suspension convention fixes the Koszul signs in the quadratic coefficients:
we define \(c_{\alpha\beta}{}^\gamma\) by
\[
d\xi^\gamma
=
-\frac12 c_{\alpha\beta}{}^\gamma\xi^\alpha\xi^\beta .
\]
Equivalently, \(c_{\alpha\beta}{}^\gamma\) is obtained from
\(f_{\alpha\beta}{}^\gamma\) by the standard parity-suspension signs; in the
purely even case \(c_{\alpha\beta}{}^\gamma=f_{\alpha\beta}{}^\gamma\).
With this convention, a relative \(p\)-cochain \(c\) can be written as
\[
c=\frac{1}{p!}\,c_{\alpha_1\cdots \alpha_p}\xi^{\alpha_1}\cdots \xi^{\alpha_p},
\]
and we associate to it the BPS word
\[
\mathcal O_c=\frac{1}{p!}\,c_{\alpha_1\cdots \alpha_p}\Psi^{\alpha_1}\cdots \Psi^{\alpha_p}.
\]

\begin{proposition}\label{prop:q-vs-d}
With the Chevalley--Eilenberg sign convention used in this paper, the canonical
identification~\eqref{eq:phi-relative} intertwines the supercharge \(Q\) with
minus the relative Chevalley--Eilenberg differential. Equivalently, for every
relative cochain \(c\), one has
\begin{equation}\label{eq:q-minus-d-main}
Q\mathcal O_c=-\mathcal O_{dc}.
\end{equation}
\end{proposition}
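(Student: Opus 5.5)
Both sides are derivations of the same parity, so I will check the identity on generators and then extend. The assignment \(c\mapsto \mathcal O_c\) is the evaluation map \(\xi^\alpha\mapsto \Psi^\alpha\). Since the \(\xi^\alpha\) are exactly the coordinates of \(\Pi(\mathfrak g\otimes A_+)\) and \(\Psi^\alpha=\xi^\alpha(\Psi)\), this is an isomorphism of graded-commutative algebras
\[
\widehat{\operatorname{Sym}}\bigl(\Pi(\mathfrak g\otimes A_+)^*\bigr)\to \CC[\Psi^\alpha],
\]
and its \(\mathfrak g\)-invariant part is the canonical identification \(\Phi\) of Proposition~\ref{prop:bps-words-relative}. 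The relative differential \(d\) is the restriction to \(\mathfrak g\)-invariants of the full Chevalley--Eilenberg differential on \(\operatorname{Sym}(\Pi(\mathfrak g\otimes A_+)^*)\), which is an odd derivation determined by \(d\xi^\gamma\). The supercharge \(Q\) is, by definition, the odd derivation of the word algebra determined by \(Q\Psi\) and the graded Leibniz rule. Hence \(Q\) and \(-d\), transported through the evaluation isomorphism, are both odd derivations of the same free graded-commutative algebra. It therefore suffices to show that they agree on the generators \(\xi^\gamma\leftrightarrow \Psi^\gamma\). This reduces the claim to the single identity
\[
Q\Psi^\gamma=\tfrac12 c_{\alpha\beta}{}^\gamma\Psi^\alpha\Psi^\beta.
\]

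To prove this identity I expand \(Q\Psi=\tfrac12\{\Psi,\Psi\}\) in components, with \(\Psi=\sum_\alpha\Psi^\alpha\tau_\alpha\) and \(\Psi\) odd overall. Moving \(\Psi^\beta\) past \(\tau_\alpha\) produces the Koszul sign \((-1)^{|\Psi^\beta||\tau_\alpha|}\). This gives
\[
\Psi^2=\tfrac12\sum(-1)^{|\Psi^\beta||\tau_\alpha|}\Psi^\alpha\Psi^\beta[\tau_\alpha,\tau_\beta]
=\tfrac12(-1)^{\epsilon(\alpha,\beta)}f_{\alpha\beta}{}^\gamma\Psi^\alpha\Psi^\beta\tau_\gamma .
\]
Reading off the \(\tau_\gamma\) coefficient gives \(Q\Psi^\gamma\), up to a further sign from writing \(\Psi=\sum\tau_\alpha\Psi^\alpha\) versus \(\sum\Psi^\alpha\tau_\alpha\), which is consistent because \(|\Psi^\alpha|+|\tau_\alpha|\) is odd. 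The paper defines \(c_{\alpha\beta}{}^\gamma\) as precisely \(f_{\alpha\beta}{}^\gamma\) twisted by these parity-suspension signs, so the identity becomes a comparison of conventions. I will first check it in the purely even case, where \(c=f\). There \(Q\Psi^\gamma=\tfrac12 f_{\alpha\beta}{}^\gamma\Psi^\alpha\Psi^\beta\) and \(d\xi^\gamma=-\tfrac12 f_{\alpha\beta}{}^\gamma\xi^\alpha\xi^\beta\), so \(Q\leftrightarrow -d\) holds as claimed. I will then verify that the odd sign \((-1)^{\epsilon}\) matches the suspension sign defining \(c\) in the mixed case.

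Finally I extend from generators to arbitrary cochains. For \(c=\tfrac1{p!}c_{\alpha_1\cdots\alpha_p}\xi^{\alpha_1}\cdots\xi^{\alpha_p}\), both \(Q\mathcal O_c\) and \(-\mathcal O_{dc}\) are obtained by applying the graded Leibniz rule across the monomial. The Leibniz signs \((-1)^{|X|}\) are computed from the parities of \(\Psi^\alpha\) and \(\xi^\alpha\), and these agree, so the two expansions match term by term. The constant coefficients \(c_{\alpha_1\cdots\alpha_p}\) are inert under both derivations. Because \(d\) preserves \(\mathfrak g\)-invariants, the identity restricts to relative cochains, which gives \eqref{eq:q-minus-d-main}.

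The main obstacle is the sign bookkeeping in the second step. One must make sure that the Koszul sign from commuting component fields past basis elements reproduces exactly the suspension convention implicit in \(c_{\alpha\beta}{}^\gamma\), and that the overall sign comes out as \(-d\) rather than \(+d\). I would handle this by checking one representative case for each parity pair \((|\tau_\alpha|,|\tau_\beta|)\).
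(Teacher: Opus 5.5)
Your proposal is correct and follows the paper's proof: you use that $Q$ and $-d$ are odd derivations of the same graded-commutative algebra to reduce to the generator identity $Q\Psi^\gamma=\tfrac12 c_{\alpha\beta}{}^\gamma\Psi^\alpha\Psi^\beta$ against $d\xi^\gamma=-\tfrac12 c_{\alpha\beta}{}^\gamma\xi^\alpha\xi^\beta$. Your Koszul-sign expansion of $\tfrac12\{\Psi,\Psi\}$, and your remark that the relative differential is the Chevalley--Eilenberg differential of the ideal $\mathfrak g\otimes A_+$ restricted to $\mathfrak g$-invariants, spell out what the paper leaves implicit; the mixed-parity sign check you defer is the same convention-matching step the paper asserts without computing.
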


\begin{proof}
It is enough to check the intertwining on degree-one generators. Writing
\eqref{eq:q-psi-main} in the suspended coordinates above gives
\[
Q\Psi^\gamma=\frac12 c_{\alpha\beta}{}^\gamma\Psi^\alpha\Psi^\beta.
\]
On the other hand, by the defining Chevalley--Eilenberg sign convention for
the suspended structure constants,
\[
d\xi^\gamma=-\frac12 c_{\alpha\beta}{}^\gamma\xi^\alpha\xi^\beta.
\]
Under the identification~\eqref{eq:phi-relative}, the generators
\(\xi^\alpha\) correspond to \(\Psi^\alpha\). Since both \(Q\) and \(-d\) are
graded derivations and agree on generators, they agree on all cochains, which
is exactly~\eqref{eq:q-minus-d-main}.
\end{proof}

Consequently,
\begin{equation}\label{eq:qcohomology-relative}
H^\bullet\!\bigl(\mathcal W^\bullet(\mathfrak g,A),Q\bigr)
\cong
H^\bullet(\mathfrak g[A],\mathfrak g;\CC).
\end{equation}
This is the precise algebraic meaning of the statement that \(Q\)-cohomology is
relative Lie algebra cohomology: the identification is already present at the
cochain level through \(\Phi\).

In addition to the cohomological degree, the complex
\(\mathcal W^\bullet(\mathfrak g,A)\) carries a natural
\(\mathbb Z_{\ge 0}^5\)-grading by the numbers of superspace derivatives.
Concretely, each coefficient
\[
\left.
\partial_{z_+}^{r_+}\partial_{z_-}^{r_-}
\partial_{\theta_1}^{s_1}\partial_{\theta_2}^{s_2}\partial_{\theta_3}^{s_3}
\Psi(z,\theta)\right|_{z=\theta=0}
\]
is assigned derivative multidegree \((r_+,r_-,s_1,s_2,s_3)\), and the
multidegree of a word is defined additively on products. We write a general
derivative multidegree as
\[
\mathbf{n}=(n_{z_+},n_{z_-},n_{\theta_1},n_{\theta_2},n_{\theta_3})
\in \mathbb Z_{\ge 0}^5,
\qquad
|\mathbf{n}|:=n_{z_+}+n_{z_-}+n_{\theta_1}+n_{\theta_2}+n_{\theta_3}.
\]
Thus
\[
\mathcal W^\bullet(\mathfrak g,A)
=
\bigoplus_{\mathbf{n}\in \mathbb Z_{\ge 0}^5}
\mathcal W^{\bullet;\mathbf{n}}(\mathfrak g,A).
\]
The multidegree records total numbers of derivatives: the first two entries
count \(z_\pm\)-derivatives, and the last three entries count
\(\theta_i\)-derivatives. Differentiating the identity \(Q\Psi=\Psi^2\) shows
that \(Q\) sends any derivative of \(\Psi\) to a sum of products of derivatives
with the same total derivative multidegree. Equivalently, the total numbers
of
\[
\partial_{z_+},\qquad \partial_{z_-},\qquad
\partial_{\theta_1},\qquad \partial_{\theta_2},\qquad \partial_{\theta_3}
\]
are preserved by \(Q\). Hence \(Q\) preserves this multigrading, and the
relative cohomology inherits the corresponding decomposition
\begin{equation}\label{eq:derivative-refinement}
H^\bullet(\mathfrak g[A],\mathfrak g;\CC)
=
\bigoplus_{\mathbf{n}\in \mathbb Z_{\ge 0}^5}
H^{\bullet;\mathbf{n}}
(\mathfrak g[A],\mathfrak g;\CC).
\end{equation}
We also define the corresponding weighted level by
\[
L
:=
3n_{z_+}+3n_{z_-}+2n_{\theta_1}+2n_{\theta_2}+2n_{\theta_3}.
\]
This is the grading used in the later examples: \(\mathbf{n}=(0,0,4,4,4)\)
has level \(24\), while \(\mathbf{n}=(0,0,3,3,3)\) has level \(18\).
Because \(\Psi|_{z=\theta=0}=0\), every \(\Psi\) in a BPS word contributes at
least one superspace derivative. Hence for fixed multidegree \(\mathbf{n}\),
the cohomological degree satisfies
\begin{equation}\label{eq:top-degree-bound}
p\le |\mathbf{n}|.
\end{equation}
When equality holds in~\eqref{eq:top-degree-bound}, only first derivatives of
\(\Psi\) can occur.

\begin{proposition}[Top-degree/Koszul identification]\label{prop:top-degree-koszul}
Fix a multidegree \(\mathbf{n}\), and set \(p=|\mathbf{n}|\).
Then
\begin{equation}\label{eq:top-degree-supercommuting}
H^{p;\mathbf{n}}(\mathfrak g[A],\mathfrak g;\CC)
\cong
\OO(\mathcal C_{\mathfrak g}^{3|2})^G_{\mathbf{n}}.
\end{equation}
On the right, the subscript \(\mathbf n\) refers to the multihomogeneous piece
in which the two odd coordinates corresponding to \(\partial_{z_\pm}\Psi\)
carry degrees \(n_{z_\pm}\), and the three even coordinates corresponding to
\(\partial_{\theta_i}\Psi\) carry degrees \(n_{\theta_i}\).
\end{proposition}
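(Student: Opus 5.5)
The plan is to work directly with the BPS-word complex \(\mathcal W^\bullet(\mathfrak g,A)\), which by Proposition~\ref{prop:bps-words-relative} and Proposition~\ref{prop:q-vs-d} computes the relative cohomology. We identify both the top-degree cochains and the image of the differential landing in top degree.

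\emph{Step 1: top degree has no outgoing differential.} Since \(Q\) preserves the multidegree \(\mathbf n\) and raises \(p\), the bound~\eqref{eq:top-degree-bound} forces \(\mathcal W^{p+1;\mathbf n}=0\) when \(p=|\mathbf n|\). Hence every top-degree cochain is a cocycle, and
\[
H^{p;\mathbf n}\cong \mathcal W^{p;\mathbf n}\big/\,Q\bigl(\mathcal W^{p-1;\mathbf n}\bigr).
\]

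\emph{Step 2: top-degree cochains are invariant polynomials on the ambient superspace.} In top degree only first derivatives occur: \(\partial_{z_\pm}\Psi|_0\) and \(\partial_{\theta_i}\Psi|_0\). These are coordinates on \(\Pi(\mathfrak g\otimes V)\), where \(V=\operatorname{span}(z_\pm,\theta_i)\) is the degree-one part of \(A_+\). Under parity reversal the \(z_\pm\) directions become odd and the \(\theta_i\) directions become even. Thus \(\mathcal W^{p;\mathbf n}\) is the \(\mathfrak g\)-invariant (equivalently \(G\)-invariant, since \(G\) is connected) multihomogeneous piece of \(\OO(\mathfrak g^{3|2})\), with the three \(\theta\)-slots playing the role of the \(x_i\) and the two \(z\)-slots the role of the \(\psi_a\).

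\emph{Step 3: the image of \(Q\) is the invariant part of the super-commuting ideal.} A word in \(\mathcal W^{p-1;\mathbf n}\) contains exactly one second-derivative factor, namely \(\partial_u\partial_v\Psi|_0\) with \(u,v\in\{z_\pm,\theta_i\}\); all other factors are first derivatives. Differentiating \(Q\Psi=\Psi^2\) twice and evaluating at the origin (where \(\Psi=0\)) gives
\[
Q\,\partial_u\partial_v\Psi|_0=\pm[\partial_u\Psi|_0,\partial_v\Psi|_0].
\]
Meanwhile \(Q\) of a first derivative is \(\partial(\Psi^2)|_0=0\). So \(Q\) sends such a word to itself with the second-derivative slot replaced by the corresponding bracket of first-derivative coordinates. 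The resulting bracket is \(\lambda([x_i,x_j])\), \(\lambda([x_i,\psi_a])\) or \(\lambda([\psi_a,\psi_b])\), according to the types of \(u,v\).

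Hence \(Q(\mathcal W^{p-1;\mathbf n})\) is the \(G\)-invariant part of the degree-\(\mathbf n\) component of the ideal \(I\) generated by these brackets. Concretely, it is the image of \(\bigl(\OO(\mathfrak g^{3|2})\otimes\mathfrak g^*\otimes \operatorname{Sym}^2_{\rm super}V\bigr)^G\to I\). Since \(G\) is reductive, taking invariants is exact, so \(I^G\) is exactly this image. This gives \(H^{p;\mathbf n}\cong (\OO(\mathfrak g^{3|2})/I)^G_{\mathbf n}=\OO(\mathcal C^{3|2}_{\mathfrak g})^G_{\mathbf n}\).

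\emph{Main obstacle.} The key point is the surjectivity in Step 3. We must check that every element of \(I\) of multidegree \(\mathbf n\), written as \(\sum f\cdot\lambda([\cdot,\cdot])\), is realised by a top-minus-one cochain. This requires the generator set \(\{\partial_u\partial_v\Psi|_0\}\) to match \(\mathfrak g\otimes\operatorname{Sym}^2_{\rm super}V\) exactly, including the super-symmetry of the second derivatives. We also need the surjection \(\mathcal W^{p-1}\to I_{\mathbf n}\) to be \(G\)-equivariant, so that reductivity lets us pass to invariants. The remaining work is sign bookkeeping in the parity shift, matching the Koszul conventions of Proposition~\ref{prop:q-vs-d}; these signs do not affect the image.
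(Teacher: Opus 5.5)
Your proposal is correct and follows essentially the same route as the paper. The shared steps are:
\begin{itemize}
\item there is no outgoing differential in top degree;
\item top cochains are identified with invariant functions on \(\mathfrak g^{3|2}\);
\item the incoming differential, from words with a single second-derivative factor, produces exactly the super-commuting brackets;
\item reductivity of \(G\) lets you exchange invariants with the quotient by the ideal.
\end{itemize}
Two of your remarks spell out what the paper leaves implicit: that \(Q\) kills first-derivative factors, and that the degree-two part of \(A_+\) is \(\operatorname{Sym}^2_{\mathrm{super}}V\) because \(A\) is free supercommutative, so every generator of the ideal is realized.
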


\begin{proof}
Via~\eqref{eq:qcohomology-relative}, it suffices to compute the
\(Q\)-cohomology in this fixed multidegree. By the definition of \(p\), each
of the \(p\) factors in a word contributes exactly one superspace derivative.
Thus only the five first derivatives
\[
\partial_{z_+}\Psi,\qquad \partial_{z_-}\Psi,\qquad
\partial_{\theta_1}\Psi,\qquad \partial_{\theta_2}\Psi,\qquad \partial_{\theta_3}\Psi
\]
can appear. Since \(\Psi\) is odd, the \(\theta_i\)-derivatives are even while
the \(z_\pm\)-derivatives are odd, so after the standard parity bookkeeping
these generators form a \(3|2\)-tuple in the sense of
Section~\ref{sec:super-commuting}.

It follows that the top cochain group in this multidegree is exactly the
\(\mathfrak g\)-invariant multihomogeneous piece of the polynomial
superalgebra on that \(3|2\)-tuple. Moreover, \eqref{eq:top-degree-bound}
gives
\[
\mathcal W^{p+1;\mathbf{n}}(\mathfrak g,A)=0,
\]
so the cohomology in degree \(p\) is the quotient of this top cochain group by
the image of the incoming differential
\[
Q:
\mathcal W^{p-1;\mathbf{n}}(\mathfrak g,A)
\longrightarrow
\mathcal W^{p;\mathbf{n}}(\mathfrak g,A).
\]
An element of
\(\mathcal W^{p-1;\mathbf{n}}(\mathfrak g,A)\)
has exactly one second-derivative factor and \(p-2\) first-derivative factors.
Applying \(Q\) to that second-derivative factor and using \(Q\Psi=\Psi^2\), one
obtains second derivatives of \(\Psi^2\). After evaluation at \(z=\theta=0\),
every term containing an undifferentiated \(\Psi\) vanishes because
\(\Psi|_{z=\theta=0}=0\), and the surviving terms are precisely the brackets
of first derivatives. Therefore the image of the incoming differential is
generated by the quadratic relations
\[
[\partial_{\theta_i}\Psi,\partial_{\theta_j}\Psi],\qquad
[\partial_{\theta_i}\Psi,\partial_{z_\pm}\Psi],\qquad
[\partial_{z_{\dot\alpha}}\Psi,\partial_{z_{\dot\beta}}\Psi],
\]
where \(\dot\alpha,\dot\beta\in\{\dot 1,\dot 2\}\) and
\(z_{\dot 1}=z_+\), \(z_{\dot 2}=z_-\). These are exactly the defining
equations of \(\mathcal C_{\mathfrak g}^{3|2}\) once one identifies the three
even coordinates with \(\partial_{\theta_i}\Psi\) and the two odd coordinates
with \(\partial_{z_\pm}\Psi\). Since each multihomogeneous piece is
finite-dimensional and \(G\) is reductive, taking \(G\)-invariants commutes
with quotienting by this \(G\)-stable image. Quotienting by the image of \(Q\)
therefore gives the stated multihomogeneous piece of
\(\OO(\mathcal C_{\mathfrak g}^{3|2})^G\).
\end{proof}

Proposition~\ref{prop:top-degree-koszul} is the point at which the
super-commuting geometry of Section~\ref{sec:super-commuting} re-enters the
relative \(Q\)-complex.

\begin{remark}[Comparison with derived commuting schemes]
For an ordinary two-dimensional abelian Lie algebra \(a\),
Berest--Felder--Patotski--Ramadoss--Willwacher~\cite[Theorem~6.7 and Proposition~8.2]{BFPRW17}
construct the derived
Harish--Chandra map and identify its source DG algebra with a relative
Chevalley--Eilenberg DG algebra, while
Li--Nadler--Yun~\cite[Corollary~6.3.5 and Theorem~6.4.1]{LNY24} compute,
conditional on a spectral Ansatz, the DG algebra of derived functions on the
derived commuting stack and recover the corresponding \(d=2\) Lie-algebra
invariant-function formula.
Proposition~\ref{prop:top-degree-koszul} is not an instance of these theorems:
the present algebra \(A\) is supercommutative, and the argument here passes to
the classical top-degree quotient rather than retaining the full derived
enhancement. It should nevertheless be viewed as a close classical super
analogue. In the top derivative degree, the incoming differential imposes
exactly the super-commuting relations and produces
\(\OO(\mathcal C_{\mathfrak g}^{3|2})^G\).
\end{remark}

\section{Stable classes, the Loday--Quillen--Tsygan theorem, and fortuitous classes}\label{sec:stable-classes}

\subsection{The stable case: Loday--Quillen--Tsygan and single-graviton classes}\label{subsec:stable-case}

We begin with the stable relative type-\(A\) theorem for the
algebra~\eqref{eq:A23}. Passing from absolute to relative cohomology via the
augmentation splitting \(A=\CC\oplus A_+\), the Loday--Quillen--Tsygan theorem
gives
\begin{equation}\label{eq:relative-lqt}
H^\bullet(\mathfrak{gl}_\infty[A],\mathfrak{gl}_\infty;\CC)
\cong
\operatorname{Sym}\!\bigl(\widetilde{\HC}^{\,\bullet}(A)[-1]\bigr).
\end{equation}
Equivalently,
\begin{equation}\label{eq:relative-prim}
\operatorname{Prim}\,H^\bullet(\mathfrak{gl}_\infty[A],\mathfrak{gl}_\infty;\CC)
\cong
\widetilde{\HC}^{\,\bullet}(A)[-1].
\end{equation}
Thus the primitive stable classes are identified with reduced cyclic cohomology.
We next write explicit superfield representatives for these primitive
generators. Teleman's map~\eqref{eq:intro-psi} is the cohomological packaging
of Feigin's construction that carries this stable source into current-algebra
cohomology, while the broader stable-image expectation~\eqref{eq:intro-stable}
asks whether fixed-rank cohomology is exhausted by the image of this stable
sector.

\begin{definition}[Single-graviton and multi-graviton classes]
Following Chang--Yin~\cite[\S 2]{CY13}, fix
\[
p_\pm,m_1,m_2,m_3\in\mathbb Z_{\ge 0},
\qquad
e_1,e_2,e_3,k_\pm\in\{0,1\},
\]
and, for notational compactness, write
\[
\Psi_+:=\partial_{z_+}\Psi,\qquad
\Psi_-:=\partial_{z_-}\Psi,\qquad
\Psi_i:=\partial_{\theta_i}\Psi.
\]
For homogeneous factors \(X_1,\dots,X_n\), each of definite parity, define
\[
\operatorname{SymTr}(X_1,\dots,X_n)
:=
\frac{1}{n!}\sum_{\sigma\in S_n}\epsilon(\sigma;X)\,
\operatorname{Tr}\!\bigl(X_{\sigma(1)}\cdots X_{\sigma(n)}\bigr),
\]
where \(\epsilon(\sigma;X)\) is the Koszul sign of the permutation. Set
\begin{equation}\label{eq:single-graviton-representatives}
\begin{aligned}
\mathcal G^{k_+,k_-;m_1,m_2,m_3}_{p_+,p_-;e_1,e_2,e_3}
:={}&\left.
\partial_{z_+}^{p_+}\partial_{z_-}^{p_-}
\partial_{\theta_1}^{e_1}\partial_{\theta_2}^{e_2}\partial_{\theta_3}^{e_3}
\operatorname{SymTr}\!\Bigl(
\Psi_+^{k_+},
\Psi_-^{k_-},
\Psi_1^{m_1},
\Psi_2^{m_2},
\Psi_3^{m_3}
\Bigr)\right|_{z=\theta=0}.
\end{aligned}
\end{equation}
Although the target is stable relative cohomology, each class represented
by~\eqref{eq:single-graviton-representatives} already appears at sufficiently
large finite rank, so \(\operatorname{Tr}\) may be read as the ordinary matrix
trace in that range. A \emph{single-graviton class} is a cohomology class
represented by an expression~\eqref{eq:single-graviton-representatives}. A
\emph{multi-graviton class} is a finite product of single-graviton classes.
\end{definition}

The representative
\(\mathcal G^{k_+,k_-;m_1,m_2,m_3}_{p_+,p_-;e_1,e_2,e_3}\)
in~\eqref{eq:single-graviton-representatives} has multidegree
\[
(p_++k_+,\,p_-+k_-,\,e_1+m_1,\,e_2+m_2,\,e_3+m_3),
\]
hence level
\[
L=
3(p_++k_+)+3(p_-+k_-)+2(e_1+m_1)+2(e_2+m_2)+2(e_3+m_3).
\]

\begin{proposition}\label{prop:primitive-single-graviton}
The single-graviton classes span
\[
\operatorname{Prim}\,H^\bullet(\mathfrak{gl}_\infty[A],\mathfrak{gl}_\infty;\CC).
\]
\end{proposition}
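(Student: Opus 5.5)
The plan is to reduce the statement to an explicit computation of reduced cyclic cohomology of \(A\) and then match generators. By~\eqref{eq:relative-prim}, the primitive part is \(\widetilde{\HC}^{\,\bullet}(A)[-1]\). So it suffices to show two things. First, every single-graviton representative~\eqref{eq:single-graviton-representatives} is a cocycle whose class is primitive. Second, these classes span the dual of reduced cyclic homology in every derivative multidegree \(\mathbf n\).

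For the first point, I use the Loday--Quillen--Tsygan description of primitives: they are the classes of single-trace cochains \(\operatorname{Tr}(\Psi\cdots)\), obtained by pulling back cyclic cocycles on \(A_+\) along the map \(\xi\mapsto\operatorname{Tr}(\xi^{\otimes n})\). The cocycle property then follows from \(Q\Psi=\Psi^2\) together with cyclicity of the trace.
\begin{itemize}
\item The generating object \(\operatorname{Tr}(\Psi^n)\) with \(n\) odd is \(Q\)-closed, because \(Q\operatorname{Tr}(\Psi^n)\) is \(\operatorname{Tr}(\Psi^{n+1})\) up to a combinatorial factor, and the trace of an even power of an odd matrix vanishes.
\item Each \(\operatorname{SymTr}(\Psi_+^{k_+},\Psi_-^{k_-},\Psi_1^{m_1},\Psi_2^{m_2},\Psi_3^{m_3})\) is the corresponding Taylor component of \(\operatorname{Tr}(\Psi^n)\).
\item Since \(Q\) commutes with \(\partial_{z_\pm}\) and \(\partial_{\theta_i}\), taking further derivatives and evaluating at the origin preserves closedness.
\end{itemize}

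For spanning, I compute \(\widetilde{\HC}_\bullet(A)\) for the free supercommutative algebra \(A=\operatorname{Sym}(V)\) with \(V=\CC^{2|3}\). By HKR and the Connes SBI sequence for smooth algebras in characteristic zero, reduced cyclic homology equals reduced de Rham data:
\[
\widetilde{\HC}_n(A)\cong \Omega^n_A/d\Omega^{n-1}_A \oplus H^{n-2}_{dR}\oplus\cdots,
\]
and the super de Rham cohomology of a super affine space is trivial in positive degree. Hence \(\widetilde{\HC}_n(A)\cong\Omega^n_A/d\Omega^{n-1}_A\), which for \(n\ge1\) is isomorphic to \(d\Omega^n_A\subset\Omega^{n+1}_A\). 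I then compute its character in the five derivative fugacities. On the Lie side, differentials \(dz_\pm\) are odd and \(d\theta_i\) are even, so an exact \((n+1)\)-form is \(d\) of a monomial form. The element \(d(f\,dz_+^{k_+}\cdots)\) corresponds to the choice of \(n+1\) generators \((k_\pm,m_i)\) together with a coefficient monomial \(z^{p}\theta^{e}\). The coefficient monomial matches the outer derivatives \(\partial^{p_\pm}\partial^{e_i}\), and the form degrees match the inner first-derivative insertions. This is exactly the index set of the single graviton labels.

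Finally, I construct the pairing. Under LQT duality, the single-graviton cochain evaluated against the image of a cyclic cycle gives the natural pairing of multilinear forms. I show it is triangular with respect to the monomial basis of \(d\Omega\), hence nondegenerate. The spanning claim follows multidegree by multidegree, since all pieces are finite-dimensional.

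The main obstacle is this last identification: proving that the Feigin/LQT map sends the exact form \(d(z^p\theta^e\,dz_+^{k_+}dz_-^{k_-}d\theta_1^{m_1}d\theta_2^{m_2}d\theta_3^{m_3})\) to precisely the SymTr expression, with all Koszul signs correct. I would handle it first by comparing Hilbert series, which shows the count of labels equals \(\dim\widetilde{\HC}\) in each \(\mathbf n\). Then I would prove linear independence of the representatives via their leading terms, evaluating on diagonal matrices at large rank. Spanning then follows from the dimension count.
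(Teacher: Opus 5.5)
Your proposal has a genuine gap in the spanning step. You claim that the single-graviton labels $(p_\pm,e_i;k_\pm,m_i)$ with $k_++k_-+m_1+m_2+m_3=n+1$ are in bijection with a basis of $d\Omega^n\cong\widetilde{\HC}_n(A)$, and you then conclude by a Hilbert-series count plus linear independence. In fact these labels index the \emph{whole} monomial basis $z^{p}\theta^{e}\,dz^{k}\,d\theta^{m}$ of $\Omega^{n+1}$, which is in general strictly bigger than $d\Omega^n$. The single-graviton representatives are therefore linearly dependent, and both the dimension count and the independence claim are false.

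Already in cohomological degree $1$ and multidegree $(1,1,0,0,0)$ there are two labels, yet
\[
\mathcal G^{0,1;0,0,0}_{1,0;0,0,0}=\operatorname{Tr}(\partial_{z_+}\partial_{z_-}\Psi)|_{z=\theta=0}=\mathcal G^{1,0;0,0,0}_{0,1;0,0,0}
\]
is literally the same cochain. This matches $\widetilde{\HC}_0(A)=A_+$, which is one-dimensional there, spanned by $z_+z_-$.

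What is true is a surjectivity statement, and that is all the paper's proof asserts: the monomial classes \emph{span}, and Feigin's map sends each one to the corresponding $\operatorname{SymTr}$ representative. Concretely, the single graviton attached to a monomial $\omega\in\Omega^{n+1}$ comes from the cyclic cocycle $(a_0,\dots,a_n)\mapsto C_\omega(da_0\cdots da_n)$, where $C_\omega$ is the dual-monomial functional. Equivalently, it comes from the functional $C_\omega\circ d$ on $\Omega^n/d\Omega^{n-1}\cong\widetilde{\HC}_n(A)$. By the super Poincar\'e lemma, $d\colon\Omega^n/d\Omega^{n-1}\to\Omega^{n+1}$ (and $d\colon A_+\to\Omega^1$) is injective, and all weight spaces are finite-dimensional. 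Hence every functional on $\widetilde{\HC}_n(A)$ has the form $C\circ d$, so these classes span. Their linear relations are exactly the kernel of $C\mapsto C\circ d$. You should replace the count-and-independence step by this argument.

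Your cocycle step is also wrong as stated: $\operatorname{SymTr}(\Psi_+^{k_+},\Psi_-^{k_-},\Psi_1^{m_1},\Psi_2^{m_2},\Psi_3^{m_3})$ is not a Taylor coefficient of $\operatorname{Tr}(\Psi(z,\theta)^n)$. Since the $\theta_i$ are odd, no $\theta_i^{m_i}$ with $m_i\ge 2$ occurs. Moreover, the $\theta$-coefficients produce antisymmetrized rather than symmetrized traces of the even $\Psi_i$. For instance, $\operatorname{Tr}(\Psi^2)\equiv 0$ identically, whereas $\operatorname{SymTr}(\Psi_1,\Psi_2)=\operatorname{Tr}(\Psi_1\Psi_2)\neq 0$.

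The correct argument is covariance. Because $Q$ (super)commutes, up to sign, with $\partial_{z_\pm}$ and $\partial_{\theta_i}$, one gets $Q(\partial_a\Psi)=\pm\partial_a(\Psi^2)$, and this equals the graded commutator of $\Psi$ with $\partial_a\Psi$. Hence $Q$ annihilates every supertrace of a product of first-derivative superfields, by cyclicity of the supertrace. Applying the outer derivatives $\partial_{z_\pm}^{p_\pm}\partial_{\theta_i}^{e_i}$ and evaluating at the origin then preserves closedness. In the paper this point is subsumed in the cited Feigin/Teleman identification.
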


\begin{proof}
By~\eqref{eq:relative-prim}, it is enough to exhibit a spanning family of
\(\widetilde{\HC}^{\,\bullet}(A)\) whose images are the classes
\eqref{eq:single-graviton-representatives}. The algebra
\[
A=\CC[z_+,z_-]\otimes \Lambda(\theta_1,\theta_2,\theta_3)
\]
is a free supercommutative algebra. Teleman's description of the Hodge
decomposition~\cite[\S 1]{Tel03} therefore identifies
\(\widetilde{\HC}^{\,\bullet}(A)\) with the cohomology of the truncated super
de~Rham complex; compare also Fishel--Grojnowski--Teleman~\cite{FGT08} for
the strong Macdonald theorem behind this Hodge-theoretic package, and the
ordinary smooth-commutative case in
Loday~\cite[Theorem~3.4.12]{Lod98}. In particular, every reduced cyclic class
is represented by a linear combination of monomials
\[
z_+^{p_+}z_-^{p_-}\theta_1^{e_1}\theta_2^{e_2}\theta_3^{e_3}
dz_+^{k_+}dz_-^{k_-}d\theta_1^{m_1}d\theta_2^{m_2}d\theta_3^{m_3},
\qquad
p_\pm,m_i\in\mathbb Z_{\ge 0},
\quad
e_i,k_\pm\in\{0,1\}.
\]
Here the restrictions \(e_i,k_\pm\in\{0,1\}\) reflect that
\(\theta_i\) and \(dz_\pm\) are odd, whereas the exponents \(m_i\) are
arbitrary because the \(d\theta_i\) are even and hence generate a symmetric
algebra.

Now fix one such monomial and let \([\omega]\in \widetilde{\HC}^{\,\bullet}(A)\)
denote its class. Under Feigin's construction in Teleman's cohomological
packaging~\cite{Fei88,Tel03}, the class \([\omega]\) maps to the cohomology
class represented by
\[
\left.
\partial_{z_+}^{p_+}\partial_{z_-}^{p_-}
\partial_{\theta_1}^{e_1}\partial_{\theta_2}^{e_2}\partial_{\theta_3}^{e_3}
\operatorname{SymTr}\!\bigl(
\Psi_+^{k_+},
\Psi_-^{k_-},
\Psi_1^{m_1},
\Psi_2^{m_2},
\Psi_3^{m_3}
\bigr)\right|_{z=\theta=0}.
\]
This is precisely the single-graviton representative
\[
\mathcal G^{k_+,k_-;m_1,m_2,m_3}_{p_+,p_-;e_1,e_2,e_3}.
\]
Concretely, the differential factors \(dz_\pm,d\theta_i\) determine the entries
inserted into the symmetrized trace, while the polynomial factors
\(z_\pm,\theta_i\) are extracted by the external derivatives
\(\partial_{z_\pm},\partial_{\theta_i}\) followed by evaluation at the origin.
Since the classes of the monomials \(\omega\) span
\(\widetilde{\HC}^{\,\bullet}(A)\), the corresponding single-graviton classes
span \(\operatorname{Prim}\,H^\bullet(\mathfrak{gl}_\infty[A],\mathfrak{gl}_\infty;\CC)\).
\end{proof}

\begin{remark}[Orthogonal and symplectic stable analogues]\label{rem:osp-dihedral-caveat}
There are also stable orthogonal and symplectic analogues of the
Loday--Quillen--Tsygan theorem. For an ordinary involutive associative algebra
\(B\), the stable primitive homology of the orthogonal and symplectic Lie
algebras is governed by dihedral homology rather than ordinary cyclic homology;
see Loday--Procesi~\cite{LP88} and
Loday~\cite[Theorem~10.5.5 and Corollary~10.5.6]{Lod98}. In the stable range
one has
\[
\operatorname{Prim} H_\bullet(\mathfrak{so}_\infty(B))\cong HD_{\bullet-1}(B),
\qquad
\operatorname{Prim} H_\bullet(\mathfrak{sp}_\infty(B))\cong HD_{\bullet-1}(B).
\]
Here the shift by \(-1\) parallels the one-step suspension pattern familiar
from the type-\(A\) Loday--Quillen--Tsygan theorem.
After dualizing and passing to relative cohomology via the augmentation
splitting \(B=\CC\oplus B_+\), this suggests the stable relative source
\[
H^\bullet(\mathfrak{so}_\infty[B],\mathfrak{so}_\infty;\CC)
\cong
\operatorname{Sym}\!\bigl(\widetilde{\HD}^{\,\bullet}(B)[-1]\bigr),
\qquad
H^\bullet(\mathfrak{sp}_\infty[B],\mathfrak{sp}_\infty;\CC)
\cong
\operatorname{Sym}\!\bigl(\widetilde{\HD}^{\,\bullet}(B)[-1]\bigr),
\]
where \(\widetilde{\HD}^{\,\bullet}(B)\) denotes reduced dihedral cohomology
with respect to the chosen involution. Thus the type-\(A\) cyclic source is
replaced by a dihedral source for orthogonal and symplectic algebras. In the
present \(2|3\) algebra with trivial involution, the derivative multigrading
refinement is formal. Let \(T=(\CC^\times)^5\) act on the algebra
\(A\) of~\eqref{eq:A23} by independently scaling
\(z_+,z_-,\theta_1,\theta_2,\theta_3\). The trivial involution and the
augmentation \(A\to\CC\) are \(T\)-equivariant, and the Loday--Procesi stable
maps are natural for morphisms of involutive algebras. Hence the stable
dihedral identifications are \(T\)-equivariant. Since \(T\) is diagonalizable,
they decompose into \(T\)-weight spaces, which are precisely the derivative
multidegrees \(\mathbf{n}\).
After dualizing and passing to relative cohomology through the augmentation
splitting \(A=\CC\oplus A_+\), the primitive stable relative source therefore
refines weight by weight:
\[
\operatorname{Prim}
H^{\bullet;\mathbf{n}}(\mathfrak{so}_\infty[A],\mathfrak{so}_\infty;\CC)
\cong
\widetilde{\HD}^{\,\bullet}_{\mathbf{n}}(A)[-1],
\qquad
\operatorname{Prim}
H^{\bullet;\mathbf{n}}(\mathfrak{sp}_\infty[A],\mathfrak{sp}_\infty;\CC)
\cong
\widetilde{\HD}^{\,\bullet}_{\mathbf{n}}(A)[-1].
\]
Here the subscript denotes the \(T\)-weight piece. Equivalently, the
corresponding primitive classes come from the dihedral summand of the same
de~Rham-type generators that appear in the type-\(A\) cyclic description.
\end{remark}

\subsection{The finite-rank case: fortuitous classes}\label{subsec:finite-rank-fortuitous}

We pass from the stable classes of Section~\ref{subsec:stable-case} to
fixed-rank relative cohomology. For each classical series, the standard
upper-left-block inclusions induce restriction maps from stable rank to finite
rank. Their cokernels measure exactly the failure of the stable sector to
exhaust the finite-rank cohomology; the corresponding classes are the
fortuitous ones.

\begin{definition}[Fortuitous classes]
Let \(\mathfrak g_n\) be one of the classical Lie algebras
\[
\mathfrak{gl}_n,\qquad
\mathfrak{sl}_n,\qquad
\mathfrak{so}_n,\qquad
\mathfrak{sp}_{2n}.
\]
Write \(\mathfrak g_\infty\) for the stable classical Lie algebra in the same
series. The standard upper-left-block inclusions induce a canonical restriction
map
\[
\rho_{\mathfrak g_n}:
H^\bullet(\mathfrak g_\infty[A],\mathfrak g_\infty;\CC)
\longrightarrow
H^\bullet(\mathfrak g_n[A],\mathfrak g_n;\CC).
\]
We define the space of \emph{fortuitous classes} by
\[
\operatorname{Fort}^\bullet(\mathfrak g_n,A)
:=
\operatorname{coker}(\rho_{\mathfrak g_n}).
\]
Since \(\rho_{\mathfrak g_n}\) respects the derivative
refinement~\eqref{eq:derivative-refinement}, the cokernel inherits the
corresponding multigrading; we write
\[
\operatorname{Fort}^{\bullet;\mathbf{n}}(\mathfrak g_n,A)
\]
for its multihomogeneous pieces. Thus a \emph{fortuitous class} is precisely a
finite-rank relative cohomology class not lying in the image of
\(\rho_{\mathfrak g_n}\).
\end{definition}

Accordingly, in type \(A\), \eqref{eq:relative-lqt} identifies the source of
\(\rho_{\mathfrak g_n}\) with the reduced cyclic-cohomology sector. In the
orthogonal and symplectic families, Remark~\ref{rem:osp-dihedral-caveat}
identifies the analogous derivative-multigraded dihedral sector in the stable
range. Thus fortuitous classes measure the failure to lie in the stable source:
the cyclic source in type \(A\), and the dihedral source in the present graded
orthogonal/symplectic stable picture.\footnote{From the physics side, several large-\(N\) studies
of the superconformal index~\cite{CBCMM19,BM20,CKKN24} point in the same
direction. At weak coupling, that index is the Euler characteristic of the
\(Q\)-cohomology, so those asymptotics provide counterevidence to the naive
stable-image expectation~\eqref{eq:intro-stable}. This remains only an
index-theoretic indication, not a direct theorem about the cohomology map.}

The first explicit fortuitous class for the \(2|3\) algebra occurs for
\(\mathfrak{sl}_2\), as exhibited by Chang--Lin~\cite{CL23}.

\begin{proposition}[Chang--Lin~\cite{CL23}]\label{prop:cl23}
For \(A=\CC[z_+,z_-]\otimes \Lambda(\theta_1,\theta_2,\theta_3)\) and
\(\mathfrak g=\mathfrak{sl}_2\), the space
\[
\operatorname{Fort}^{7;(0,0,4,4,4)}(\mathfrak g,A)
\]
contains nontrivial elements.
\end{proposition}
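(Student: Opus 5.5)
The plan is to show that the multihomogeneous piece \(H^{7;(0,0,4,4,4)}(\mathfrak{sl}_2[A],\mathfrak{sl}_2;\CC)\) is strictly larger than the image of \(\rho_{\mathfrak{sl}_2}\) in the same multidegree. Since \(\rho\) preserves the derivative refinement~\eqref{eq:derivative-refinement}, the comparison can be done one multidegree at a time, and the cokernel piece is nonzero as soon as \(\dim H^{7;\mathbf n} > \dim \operatorname{im}\rho\) in \(\mathbf n=(0,0,4,4,4)\). Here \(|\mathbf n|=12\) and \(p=7\), so we are five degrees below the top, and Proposition~\ref{prop:top-degree-koszul} does not apply directly. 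The computation therefore has to use the actual relative complex \(\mathcal W^{\bullet;\mathbf n}(\mathfrak{sl}_2,A)\). This complex is finite-dimensional: it has only \(\theta\)-derivatives, each \(\theta_i\) appearing at most four times in total, and the coefficients are derivatives of \(\Psi\) of the form \(\partial_{\theta_S}\Psi\) for nonempty \(S\subset\{1,2,3\}\).

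The first step is to write down the explicit representative of Chang--Lin~\cite{CL23}. Call it \(\mathcal O_{\rm fort}\); it is an \(\mathfrak{sl}_2\)-invariant word of length \(7\) built from \(\Psi_i\), \(\partial_{\theta_i}\partial_{\theta_j}\Psi\) and \(\partial_{\theta_1}\partial_{\theta_2}\partial_{\theta_3}\Psi\) with total multidegree \((0,0,4,4,4)\). Using \(Q\Psi=\Psi^2\) and Proposition~\ref{prop:q-vs-d}, I would verify \(Q\mathcal O_{\rm fort}=0\) by expanding into \(\mathfrak{sl}_2\) components. This is a finite symbolic check.

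The second step is to show that \(\mathcal O_{\rm fort}\) is neither exact nor congruent, modulo exact terms, to an element of \(\operatorname{im}\rho_{\mathfrak{sl}_2}\).

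To control the image, I would use Proposition~\ref{prop:primitive-single-graviton}, together with the fact that \(\rho\) is an algebra map. With this, \(\operatorname{im}\rho\) in this multidegree is spanned by restrictions of products of single-graviton representatives \(\mathcal G\) whose multidegrees add to \((0,0,4,4,4)\) and whose cohomological degrees add to \(7\). A single graviton with \(k_\pm=p_\pm=0\) has multidegree \((0,0,e_i+m_i)\) and degree \(m_1+m_2+m_3-1+\dots\), as read off from the cyclic degree. Enumerating all such factorizations gives a finite list. For \(\mathfrak{sl}_2\), traces of products of \(2\times 2\) matrices satisfy trace relations, so many of these terms collapse. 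I would evaluate every candidate explicitly at rank \(2\).

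The decisive computation is linear algebra in the finite complex \(\mathcal W^{6;\mathbf n}\to\mathcal W^{7;\mathbf n}\to\mathcal W^{8;\mathbf n}\). The steps are:
\begin{enumerate}
\item compute \(\ker Q\) on \(\mathcal W^{7;\mathbf n}\);
\item compute \(\operatorname{im}Q\) from \(\mathcal W^{6;\mathbf n}\);
\item compute the span \(V\) of the graviton-product representatives;
\item check that \(\mathcal O_{\rm fort}\notin V+\operatorname{im}Q\).
\end{enumerate}
A cleaner route for the last point is to find a cycle \(\eta\) in the dual (homology) complex that pairs nontrivially with \(\mathcal O_{\rm fort}\) and pairs to zero with \(V+\operatorname{im}Q\). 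A cross-check is the Euler characteristic (index) in this multidegree, since it already indicates extra cohomology beyond gravitons.

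I expect the main obstacle to be this final rank computation. The space \(\mathcal W^{7;\mathbf n}\) is sizable, and the Koszul sign bookkeeping for mixed even and odd superfield components is error-prone. Showing that \(\mathcal O_{\rm fort}\) avoids \(V+\operatorname{im}Q\) therefore essentially requires a computer-assisted exact linear-algebra calculation, which is how~\cite{CL23} proceeds.
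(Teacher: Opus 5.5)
Your plan is essentially the paper's route. The paper's proof is only a pointer to the computer-assisted analysis of Chang--Lin~\cite[\S 4.3]{CL23}, plus the check that the representative (written explicitly in~\cite{CFLT23}) has seven \(\Psi\)'s and multidegree \((0,0,4,4,4)\); your cocycle check, followed by showing that this representative avoids \(V+\operatorname{im}Q\) in the finite complex \(\mathcal W^{\bullet;\mathbf n}(\mathfrak{sl}_2,A)\), is exactly what that computation amounts to.

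Two small fixes when you carry it out. First, the cohomological degree of \(\mathcal G^{k_+,k_-;m_1,m_2,m_3}_{p_+,p_-;e_1,e_2,e_3}\) is simply its word length \(k_++k_-+m_1+m_2+m_3\), since the external derivatives do not change it; it is not \(m_1+m_2+m_3-1\). Second, Proposition~\ref{prop:primitive-single-graviton} is stated for \(\mathfrak{gl}_\infty\), while \(\rho_{\mathfrak{sl}_2}\) has source \(H^\bullet(\mathfrak{sl}_\infty[A],\mathfrak{sl}_\infty;\CC)\). You should note that \(\operatorname{im}\rho_{\mathfrak{sl}_2}\) is still spanned by restricted graviton products: \(\mathfrak{gl}_n[A]=\mathfrak{sl}_n[A]\times A\cdot 1_n\) (the scalars are central because \(A\) is supercommutative), so restriction from \(\mathfrak{gl}_n\) to \(\mathfrak{sl}_n\) is surjective on relative cohomology, and each multidegree stabilizes in \(n\).
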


\begin{sourceproof}
This is the first explicit fortuitous class isolated in~\cite[\S 4.3]{CL23}.
In the grading conventions of the present paper, the representative written
below is a word with seven factors of \(\Psi\), hence it lies in cohomological
degree \(7\), and its derivative multidegree is \((0,0,4,4,4)\). For the
present paper, the essential point is the existence of a nonzero class in this
multigraded piece of the cokernel.
\end{sourceproof}

\begin{remark}
By construction, Proposition~\ref{prop:cl23} shows that the restriction map
\(\rho_{\mathfrak{sl}_2}\) is not surjective. It is therefore the first
explicit finite-rank obstruction visible directly in relative Lie algebra
cohomology. The counting results of
Chang--Lin~\cite[\S 4.3]{CL23} further show that the first
\(\mathfrak{sl}_2\) class outside the multi-graviton sector occurs at level
\(L=24\). Equivalently, there is no fortuitous class for \(\mathfrak{sl}_2\)
at levels \(L<24\), and Proposition~\ref{prop:cl23} identifies the first level
where one appears.
\end{remark}

\begin{remark}[Explicit representatives]
Subsequent papers make Proposition~\ref{prop:cl23} explicit at the level of
representatives. To distinguish a concrete cocycle from the generic cochain
notation \(\mathcal O_c\) in~\eqref{eq:q-minus-d-main}, write
\(\Xi_f^{\mathfrak{sl}_2}\) for one such fortuitous representative.
Choi--Kim--Lee--Park~\cite{CKLP24} express this class in component fields. In
the superspace notation used in the present paper,
Chang--Feng--Lin--Tao~\cite[(5.7)]{CFLT23} give the convenient formula
\[
\begin{aligned}
\Xi_f^{\mathfrak{sl}_2}
={}&
\left(
\operatorname{Tr}\!\bigl(
\Psi_{23}\Psi_1
+
\Psi_{13}\Psi_2
\bigr)
\operatorname{Tr}\!\bigl(
\Psi_{12}\Psi_1
\bigr)
\operatorname{Tr}\!\bigl(
\Psi_{13}
\Psi_{23}
\Psi_{23}
\bigr)
+ \text{cyclic}
\right)\biggr|_{z=\theta=0},
\end{aligned}
\]
where \(\Psi_i:=\partial_{\theta_i}\Psi\),
\(\Psi_{ij}:=\partial_{\theta_i}\partial_{\theta_j}\Psi\), and ``cyclic''
means the sum over cyclic permutations of the indices
\(1,2,3\). The displayed word contains seven factors of \(\Psi\), in agreement
with the cohomological degree in Proposition~\ref{prop:cl23}, and its
derivative multidegree is \((0,0,4,4,4)\). The same paper further shows that
the weak-coupling BPS operator can be written as
\(O_{\mathrm{bh}}=\Xi_f^{\mathfrak{sl}_2}+QO'\), where \(O'\) lies in a
\(17\)-dimensional cyclic-invariant sextic subspace; see~\cite[(5.4),(5.6),(5.8)]{CFLT23}.
Thus the first counterexample is not only detected by counting; it can already
be written as an explicit superspace cocycle.
\end{remark}

\section{Langlands duality and quantum deformation}\label{sec:langlands-quantum}

We compare the relative cohomologies~\eqref{eq:relative-main} for
Langlands-dual Lie algebras. These cohomologies need not agree even at the
classical level. We then record the conjectural formal deformation of the
relative Chevalley--Eilenberg differential, induced by the loop-corrected
supercharge, that is expected to restore Langlands duality.

\begin{remark}[Stable and Cartan-visible comparisons]
For the Langlands-dual pair
\((\mathfrak{so}_{2n+1},\mathfrak{sp}_{2n})\), the stable classical comparison
has no analogue of the finite-rank mismatch below. In the stable picture of
Remark~\ref{rem:osp-dihedral-caveat}, both the orthogonal and symplectic
families are governed by the same reduced dihedral cohomology of \(A\), hence by
the same symmetric algebra after passing to stable relative cohomology.
Likewise, the Cartan-side comparison is tautological: the Weyl groups of
\(\mathfrak{so}_{2n+1}\) and \(\mathfrak{sp}_{2n}\) are naturally identified
with the same hyperoctahedral group. After identifying the Cartan subalgebras,
the target \(\OO(\mathfrak t^{3|2})^W\) of the super-commuting restriction map
is therefore common to both sides. Thus any class compared only through its
Cartan restriction is already identified on this common target. The mismatch
recorded below is consequently a genuinely finite-rank phenomenon, and the
non-Cartan representative lies outside the part visible from the Cartan target.
\end{remark}

\subsection{The classical mismatch: two level-\texorpdfstring{$18$}{18} exceptional classes}\label{subsec:classical-mismatch-level18}

The first explicit classical mismatch occurs for the Langlands-dual pair
\((\soseven,\spsix)\).

\begin{theorem}[Chang--Lin~\cite{CL25}]\label{prop:cl25}
For \(A=\CC[z_+,z_-]\otimes \Lambda(\theta_1,\theta_2,\theta_3)\), the
multigraded cohomologies
\[
H^\bullet(\soseven[A],\soseven;\CC)
\qquad\text{and}\qquad
H^\bullet(\spsix[A],\spsix;\CC)
\]
are not isomorphic as graded vector spaces.
\end{theorem}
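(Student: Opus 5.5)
The plan is to exhibit a single multidegree in which the two cohomologies have different dimensions, namely \(\mathbf n=(0,0,3,3,3)\) at level \(L=18\). This is the multidegree where the exceptional \(\soseven\) classes live. Since both cohomologies decompose as in~\eqref{eq:derivative-refinement}, and the decomposition is canonical, a dimension mismatch in one multihomogeneous piece \(H^{p;\mathbf n}\) already shows that the full graded spaces are not isomorphic.

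\emph{Top degree.} Here \(p=|\mathbf n|=9\), and Proposition~\ref{prop:top-degree-koszul} identifies \(H^{9;\mathbf n}\) with \(\OO(\mathcal C^{3|2}_{\mathfrak g})^G_{\mathbf n}\). This space splits into two parts:
\[
\operatorname{im}\bigl(\operatorname{res}_{\mathfrak g,3|2}\bigr)\subset \OO(\mathfrak t^{3|2})^W_{\mathbf n}
\qquad\text{and}\qquad
\ker\bigl(\operatorname{res}_{\mathfrak g,3|2}\bigr).
\]
The Cartan-side target is literally the same for \(\soseven\) and \(\spsix\), since the two Weyl groups are identified and the Cartan subalgebras match. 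The difference must therefore come from non-Cartan classes, or from a difference in the image of restriction.

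By Theorem~\ref{prop:super-chevalley}, \(\soseven\) has a non-Cartan class. I would place it in multidegree \((0,0,3,3,3)\) via the explicit representative \(\Xi_{nc}^{\soseven}\). It should be built from the invariant cubic structure on the spinor-related part, which has no symplectic counterpart. For \(\spsix\) I would show that the restriction map is injective in this multidegree. The approach is to compute the invariants of degree \((3,3,3)\) in the three even variables \(x_1,x_2,x_3\), where the odd variables are absent. These invariants are trace polynomials \(\operatorname{Tr}\) of words in the \(x_i\) in the \(6\)-dimensional representation. Modulo the commuting relations, such words reduce to symmetrized traces, and symmetrized traces restrict faithfully to \(\mathfrak t\) in this low degree.

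\emph{Lower degrees.} The same multidegree also contains cohomology in degrees \(p<9\). The cleanest route is a finite computation. In multidegree \((0,0,3,3,3)\) the complex \(\mathcal W^{\bullet;\mathbf n}(\mathfrak g,A)\) is finite-dimensional, so I would compute the Euler characteristic for both algebras by a character, i.e.\ Weyl-integral, computation. Any difference in Euler characteristics forces a difference in some \(H^{p;\mathbf n}\). If the Euler characteristics coincide, which is plausible because a fortuitous class in degree \(8\) could cancel the non-Cartan class in degree \(9\), I would instead compare \(H^{9;\mathbf n}\) directly using the top-degree argument above. If that also agrees, I would compute the full cohomology in this multidegree by computer linear algebra, as Chang--Lin do.

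\emph{Main obstacle.} The hardest step is proving that \(\OO(\mathcal C^{3|2}_{\spsix})^G_{(3,3,3)}\) has no non-Cartan part. This requires controlling the nilpotent and non-reduced structure of the commuting scheme, not just its reduced points. I would first try an explicit invariant-theoretic count of both sides for \(\spsix\), and fall back on the computer-assisted rank computation of~\cite{CL25}.
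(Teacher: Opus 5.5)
\textbf{The gap: the non-Cartan class is not in $(0,0,3,3,3)$.} The main line of your argument fails because it puts the non-Cartan class in the wrong multidegree. The representative $\Xi_{nc}^{\soseven}$ contains the odd factors $\Psi_{\dot\alpha}\Psi^{\dot\alpha}$ and has derivative multidegree $(1,1,2,2,2)$. It sits in the top degree $8$ of that multidegree, not in $(0,0,3,3,3)$.

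In $\mathbf n=(0,0,3,3,3)$ no odd coordinates occur. So $H^{9;\mathbf n}$ is the $(3,3,3)$-piece of the $G$-invariants on the ordinary commuting scheme of triples, and this piece is zero for \emph{both} algebras:
\begin{itemize}
\item the invariants are trace polynomials (first fundamental theorem);
\item for a word $w$ of length $\ell$ one has $\operatorname{Tr}(w)=(-1)^{\ell}\operatorname{Tr}(w^{\mathrm{rev}})$, because transpose (resp.\ the symplectic adjoint) acts by $-1$ on $\mathfrak g$;
\item $w^{\mathrm{rev}}=w$ modulo the commuting relations, and the total degree $9$ is odd.
\end{itemize}
Alternatively, restriction is an isomorphism there by Chen--Ngo and Song--Xia--Xu, and $\OO(\mathfrak t^{\oplus 3})^W$ vanishes in odd degree because $-1\in W$.

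So the $\spsix$ injectivity you call the main obstacle is true, but it holds equally for $\soseven$. Your top-degree comparison in this multidegree yields $0=0$. The spinor-cubic heuristic is also unfounded: $\Xi_{nc}^{\soseven}$ is a polynomial in vector-representation traces.

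\textbf{What actually separates the two algebras.} In $(0,0,3,3,3)$ the difference is the fortuitous class $\Xi_f^{\soseven}$ in degree $8$, one below top (it contains the second derivatives $\Psi_{12},\Psi_{23}$). The Koszul identification cannot see it. The paper's proof is just a citation of Chang--Lin's computation (their Table~1), which places one extra $\soseven$ class in degree $8$ of $(0,0,3,3,3)$ and one in degree $8$ of $(1,1,2,2,2)$. That is your final fallback.

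Your intermediate Euler-characteristic branch is actually better than you expect. Granting the data as the paper quotes them, the only discrepancy inside $(0,0,3,3,3)$ is that single degree-$8$ class. So $\chi_{\mathbf n}$ differs by one, and detecting this needs only Weyl-integral dimension counts of the cochain spaces. The cancellation you anticipate does occur, but only in the superconformal index. That index combines $(0,0,3,3,3)$ with $(1,1,2,2,2)$, since both have $n_{z_-}-n_{z_+}=0$ and $n_{z_-}+n_{\theta_i}=3$.

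If you want a top-degree argument instead, it has to be run in $(1,1,2,2,2)$. There the genuine difficulty is the odd directions: you must show that $\Xi_{nc}^{\soseven}$ survives modulo the super-commuting ideal, and compare with the corresponding $\spsix$ piece.
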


\begin{sourceproof}
See~\cite[\S 3]{CL25}. More precisely, Table~1 of~\cite{CL25} shows that the
mismatch first occurs at level \(L=18\) in two different multidegrees. In
cohomological degree \(8\) and derivative multidegree \((0,0,3,3,3)\),
\(\soseven\) has one additional fortuitous class relative to \(\spsix\). The
same table records one additional class in cohomological degree \(8\) and
derivative multidegree \((1,1,2,2,2)\); in the terminology of the present
paper, this is the non-Cartan class. Choi--Lee~\cite[\S 3]{CE25} give an
explicit representative and show that the corresponding non-Coulomb combination
vanishes on the Cartan branch. By~\eqref{eq:top-degree-supercommuting}, the
class lies in the corresponding multihomogeneous piece of
\(\OO(\mathcal C_{\mathfrak g}^{3|2})^G\), where it restricts trivially to the
Cartan side. In particular, the cohomology dimensions differ in explicit
multidegrees.
\end{sourceproof}

\begin{remark}
The same level-by-level comparison in~\cite[Table~1]{CL25} shows that for all
levels \(L<18\), the multigraded cohomologies of \(\soseven\) and \(\spsix\)
agree. Thus Theorem~\ref{prop:cl25} identifies \(L=18\) as the first level
where this Langlands-dual mismatch appears in this example.
\end{remark}

Theorem~\ref{prop:cl25} singles out two exceptional level-\(18\) sectors:
the fortuitous sector of derivative multidegree \((0,0,3,3,3)\) and the
non-Cartan sector of derivative multidegree \((1,1,2,2,2)\). The latter is
also the one relevant to the \(3|2\) super-commuting restriction map of
Section~\ref{sec:super-commuting}, because
equation~\eqref{eq:top-degree-supercommuting} identifies this top-degree
sector with the corresponding multihomogeneous piece of
\(\OO(\mathcal C_{\mathfrak g}^{3|2})^G\). We record explicit
representatives for both classes.

For the formulas below, write
\[
\Psi_i:=\bigl.\partial_{\theta_i}\Psi\bigr|_{z=\theta=0},
\qquad
\Psi_{ij}:=\bigl.\partial_{\theta_i}\partial_{\theta_j}\Psi\bigr|_{z=\theta=0},
\]
and set \(z_{\dot 1}:=z_+,\; z_{\dot 2}:=z_-\). For the bosonic
derivatives, write
\[
\Psi_{\dot\alpha}:=\bigl.\partial_{z_{\dot\alpha}}\Psi\bigr|_{z=\theta=0},
\qquad
\Psi^{\dot\alpha}:=\epsilon^{\dot\alpha\dot\beta}\Psi_{\dot\beta}.
\]
In the fortuitous sector, Gadde--Lee--Raj--Tomar~\cite[(4.23)]{GLRT25} give
the \(\mathfrak{so}_7\) class in the form
\begingroup\small
\begin{equation}\label{eq:glrt-fortuitous-draft}
\begin{aligned}
\Xi_f^{\mathfrak{so}_7}
={}&
\operatorname{Tr}\!\bigl[\Psi_2^2\bigr]
\operatorname{Tr}\!\bigl[\Psi_1\Psi_{23}\bigr]
\operatorname{Tr}\!\bigl[\Psi_1\Psi_3\bigr]^2
\\&
-4\operatorname{Tr}\!\bigl[\Psi_2^2\bigr]
\operatorname{Tr}\!\bigl[\Psi_1\Psi_3\bigr]
\operatorname{Tr}\!\bigl[\Psi_3\Psi_1\Psi_3\Psi_{12}\bigr]
-\operatorname{Tr}\!\bigl[\Psi_1\Psi_3\bigr]^2
\operatorname{Tr}\!\bigl[\Psi_3\Psi_2\Psi_{12}\Psi_2\bigr]
\\&
-4\operatorname{Tr}\!\bigl[\Psi_1\Psi_3\bigr]^2
\operatorname{Tr}\!\bigl[\Psi_3\Psi_2^2\Psi_{12}\bigr]
+8\operatorname{Tr}\!\bigl[\Psi_1\Psi_3\bigr]
\operatorname{Tr}\!\bigl[\Psi_3\Psi_1\Psi_2^2\Psi_3\Psi_{12}\bigr]
\\&
+4\operatorname{Tr}\!\bigl[\Psi_1\Psi_3\bigr]
\operatorname{Tr}\!\bigl[\Psi_3\Psi_1\Psi_3\Psi_2\Psi_{12}\Psi_2\bigr]
+16\operatorname{Tr}\!\bigl[\Psi_1\Psi_3\bigr]
\operatorname{Tr}\!\bigl[\Psi_3\Psi_1\Psi_3\Psi_2^2\Psi_{12}\bigr]
\\&
-4\operatorname{Tr}\!\bigl[\Psi_3\Psi_{12}\bigr]
\operatorname{Tr}\!\bigl[\Psi_3\Psi_1\bigr]
\operatorname{Tr}\!\bigl[\Psi_3\Psi_1\Psi_2^2\bigr]
+8\operatorname{Tr}\!\bigl[\Psi_3\Psi_1\Psi_3\Psi_{12}\bigr]
\operatorname{Tr}\!\bigl[\Psi_1\Psi_2\Psi_3\Psi_2\bigr]
\\&
-2\operatorname{Tr}\!\bigl[\Psi_2^2\bigr]
\operatorname{Tr}\!\bigl[\Psi_3\Psi_{12}\bigr]
\operatorname{Tr}\!\bigl[\Psi_3\Psi_1\Psi_3\Psi_1\bigr]
+8\operatorname{Tr}\!\bigl[\Psi_3\Psi_1\Psi_3\Psi_1\bigr]
\operatorname{Tr}\!\bigl[\Psi_2^2\Psi_3\Psi_{12}\bigr]
\\&
+2\operatorname{Tr}\!\bigl[\Psi_3\Psi_1\Psi_3\Psi_1\bigr]
\operatorname{Tr}\!\bigl[\Psi_2\Psi_3\Psi_2\Psi_{12}\bigr]
+16\operatorname{Tr}\!\bigl[\Psi_2\Psi_3\Psi_{12}\bigr]
\operatorname{Tr}\!\bigl[\Psi_3\Psi_1\Psi_3\Psi_1\Psi_2\bigr]
\\&
+8\operatorname{Tr}\!\bigl[\Psi_3\Psi_{12}\bigr]
\operatorname{Tr}\!\bigl[\Psi_3\Psi_1\Psi_3\Psi_1\Psi_2^2\bigr]
+8\operatorname{Tr}\!\bigl[\Psi_2^2\bigr]
\operatorname{Tr}\!\bigl[\Psi_3\Psi_1\Psi_3\Psi_1\Psi_3\Psi_{12}\bigr]
\\&
+16\operatorname{Tr}\!\bigl[\Psi_3\Psi_1\Psi_3\Psi_1\Psi_2\Psi_3\Psi_2\Psi_{12}\bigr]
\\&
-8\operatorname{Tr}\!\bigl[\Psi_3\Psi_1\Psi_3\Psi_1\Psi_3\Psi_2\Psi_{12}\Psi_2\bigr]
-32\operatorname{Tr}\!\bigl[\Psi_3\Psi_1\Psi_3\Psi_1\Psi_3\Psi_2^2\Psi_{12}\bigr]
\\&
-16\operatorname{Tr}\!\bigl[\Psi_3\Psi_1\Psi_3\Psi_2\Psi_1\Psi_2\Psi_3\Psi_{12}\bigr]
-16\operatorname{Tr}\!\bigl[\Psi_3\Psi_1\Psi_3\Psi_2\Psi_1\Psi_3\Psi_2\Psi_{12}\bigr]
-16\operatorname{Tr}\!\bigl[\Psi_3\Psi_1\Psi_3\Psi_2^2\Psi_1\Psi_3\Psi_{12}\bigr].
\end{aligned}
\end{equation}
\endgroup
This class lies in cohomological degree \(8\) and derivative multidegree
\((0,0,3,3,3)\), hence at level \(18\). It is therefore an explicit
representative of the fortuitous class detected in
Theorem~\ref{prop:cl25}.

In the non-Cartan sector, Choi--Lee~\cite{CE25} give the corresponding class
in component form, and the formula below is its translation into the
superfield notation of Section~\ref{sec:bps-superfields}:
\begin{equation}\label{eq:non-cartan-draft}
\begin{aligned}
\Xi_{nc}^{\mathfrak{so}_7}
={}&
\epsilon_{ijk}\epsilon_{lmn}
\Bigl(
\operatorname{Tr}(\Psi_i\Psi_l)\operatorname{Tr}(\Psi_{\dot\alpha}\Psi^{\dot\alpha})
-\operatorname{Tr}(\Psi_i\Psi_{\dot\alpha})\operatorname{Tr}(\Psi_l\Psi^{\dot\alpha})
-4\operatorname{Tr}(\Psi_{\dot\alpha}\Psi^{\dot\alpha}\Psi_{(i}\Psi_{l)})
\Bigr)
\\
&\qquad\qquad
\times
\operatorname{Tr}(\Psi_j\Psi_m)\operatorname{Tr}(\Psi_k\Psi_n).
\end{aligned}
\end{equation}
This class lies in cohomological degree \(8\) and derivative multidegree
\((1,1,2,2,2)\), hence also at level \(18\). Since it lies in the top-degree
piece identified with \(\OO(\mathcal C_{\mathfrak g}^{3|2})^G\)
by~\eqref{eq:top-degree-supercommuting}, \(\Xi_{nc}^{\mathfrak{so}_7}\) is an
explicit representative of the non-Cartan class detected in
Theorem~\ref{prop:super-chevalley}.

\subsection{The quantum deformation and the conjectural repair}\label{subsec:quantum-deformation-repair}

We state the conjectural quantum deformation expected to repair this classical
mismatch. We first record the charge bookkeeping transported from the physical
complex to the relative cochain complex.

Under the Chang--Yin identification~\eqref{eq:phi-relative}, the relative
cochain space carries the physical charges
\[
(J_1,J_2,q_1,q_2,q_3),
\]
which, classically, are related to the derivative counts and the word length
\(p=\#(\Psi)\) by
\[
J_1=n_{z_-}+\frac{n_{\theta_1}+n_{\theta_2}+n_{\theta_3}-p}{2},
\qquad
J_2=n_{z_+}+\frac{n_{\theta_1}+n_{\theta_2}+n_{\theta_3}-p}{2},
\]
\[
q_1=\frac{p+n_{\theta_1}-n_{\theta_2}-n_{\theta_3}}{2},
\qquad
q_2=\frac{p-n_{\theta_1}+n_{\theta_2}-n_{\theta_3}}{2},
\qquad
q_3=\frac{p-n_{\theta_1}-n_{\theta_2}+n_{\theta_3}}{2},
\]
as follows from~\cite[(4.1) and the following paragraph]{CY13}. In these
conventions, the supercharge \(Q\)
has charge
\[
\left(-\frac12,-\frac12,\frac12,\frac12,\frac12\right)
\]
while the formal parameter \(\hbar\) is taken to have charge \(0\).
Consequently, one expects each correction \(d_i\) to shift charge by the same
amount as \(Q\), rather than preserve all five charges separately. In
particular, the classical count \(p=\#(\Psi)\) is not expected to survive as a
grading in the quantum theory, and the five derivative counts alone no longer
determine the relevant charge sector. A convenient replacement for the
cohomological degree is
\begin{equation}\label{eq:quantum-degree}
\deg:=2(J_1+J_2+q_1+q_2+q_3),
\end{equation}
for which \(Q\) has degree \(+1\). The quantum differential is expected to
preserve the four combinations
\begin{equation}\label{eq:quantum-conserved-combinations}
J_1-J_2,\qquad J_1+q_1,\qquad J_1+q_2,\qquad J_1+q_3.
\end{equation}

\begin{conjecture}[Quantum deformation and Langlands duality]\label{conj:quantum-langlands}
For \(A=\CC[z_+,z_-]\otimes \Lambda(\theta_1,\theta_2,\theta_3)\), there exists
a square-zero deformation of the classical relative Chevalley--Eilenberg
differential
\[
d_{\mathrm{quant}}=d+\hbar d_1+\hbar^2d_2+\cdots
\]
on \(C^\bullet(\mathfrak g[A],\mathfrak g;\CC)[[\hbar]]\) that satisfies
\(d_{\mathrm{quant}}^2=0\), is homogeneous of degree \(+1\) with respect to
\eqref{eq:quantum-degree}, and preserves the four combinations in
\eqref{eq:quantum-conserved-combinations}. Define
\[
H_\hbar(\mathfrak g[A],\mathfrak g)
:=
H\!\bigl(C^\bullet(\mathfrak g[A],\mathfrak g;\CC)[[\hbar]],d_{\mathrm{quant}}\bigr).
\]
Then, for every Langlands-dual pair \((\mathfrak g,{}^L\!\mathfrak g)\), these
quantum-deformed relative cohomologies are isomorphic:
\[
H_\hbar(\mathfrak g[A],\mathfrak g)
\cong
H_\hbar({}^L\!\mathfrak g[A],{}^L\!\mathfrak g)
\]
as \(\CC[[\hbar]]\)-modules, compatibly with the
grading~\eqref{eq:quantum-degree} and the four conserved combinations in
\eqref{eq:quantum-conserved-combinations}.
\end{conjecture}

The expected physical origin of \(d_{\mathrm{quant}}\) is the transport of a
loop-corrected supercharge \(Q_{\mathrm{quant}}\) along the Chang--Yin
identification~\eqref{eq:phi-relative}:
\begin{equation}\label{eq:d-quant-transport}
d_{\mathrm{quant}}=-\,\Phi^{-1}Q_{\mathrm{quant}}\Phi.
\end{equation}
If such a supercharge admits an expansion
\[
Q_{\mathrm{quant}}=Q+\hbar Q_1+\hbar^2Q_2+\cdots
\]
and satisfies \(Q_{\mathrm{quant}}^2=0\), then
\[
d_{\mathrm{quant}}=d+\hbar d_1+\hbar^2d_2+\cdots,
\qquad
d_i:=-\,\Phi^{-1}Q_i\Phi,
\]
and \eqref{eq:d-quant-transport} automatically gives
\(d_{\mathrm{quant}}^2=0\).

\subsection{First quantum evidence for the deformation conjecture}\label{subsec:first-quantum-evidence}

A first concrete piece of evidence for
Conjecture~\ref{conj:quantum-langlands} comes from recent work of
Budzik--Gaiotto--Kulp--Williams--Wu--Yu~\cite{BGKWWY24} and
Budzik--Kulp~\cite[\S 3]{BK26} on the loop-corrected supercharge expected to
produce \(d_{\mathrm{quant}}\) via~\eqref{eq:d-quant-transport}:
\[
Q_{\mathrm{quant}}=Q_0+\hbar Q_1+\hbar^2Q_2+\cdots,
\]
where \(Q_0=Q\) is the classical supercharge introduced in
Section~\ref{sec:bps-superfields}. On the explicit \(\mathfrak{so}_7\)
representatives~\eqref{eq:glrt-fortuitous-draft}
and~\eqref{eq:non-cartan-draft}, Choi--Lee~\cite[\S 3]{CE25} compute the
first quantum correction and obtain the first nontrivial relation, for some
cochain \(\Lambda\),
\begin{equation}\label{eq:q1-pairs}
Q_1\Xi_f^{\mathfrak{so}_7}=\Xi_{nc}^{\mathfrak{so}_7}+Q_0\Lambda.
\end{equation}
Equivalently, \(Q_1\) sends the class of \(\Xi_f^{\mathfrak{so}_7}\) to the
class of \(\Xi_{nc}^{\mathfrak{so}_7}\) in \(Q_0\)-cohomology.
In the charge conventions of Table~1 of~\cite{CL25}, these two classes lie in
the sectors
\[
\left(\frac12,\frac12,\frac52,\frac52,\frac52\right)
\qquad\text{and}\qquad
(0,0,3,3,3)
\]
for \((J_1,J_2,q_1,q_2,q_3)\). Their difference is exactly the charge of \(Q\),
\[
\left(-\frac12,-\frac12,\frac12,\frac12,\frac12\right),
\]
so \(Q_1\) has degree \(+1\) with respect to
\eqref{eq:quantum-degree} and preserves the four combinations in
\eqref{eq:quantum-conserved-combinations}.
Thus the first-order deformed differential already pairs the fortuitous
\(\mathfrak{so}_7\) class~\eqref{eq:glrt-fortuitous-draft} with the non-Cartan
class~\eqref{eq:non-cartan-draft}. In particular, the two exceptional
classical classes in Table~1 of~\cite{CL25} are no longer independent after
the first quantum correction is introduced. At present, however, the cited
literature determines the higher corrections \(d_i\) only indirectly through
transport from the quantum supercharges \(Q_i\); an intrinsic Lie-algebraic
construction of them is still unavailable.

\section{Conclusion}\label{sec:conclusion}

This paper isolates several concrete finite-rank phenomena in relative Lie
algebra cohomology already visible for the supercommutative algebra
\[
A=\CC[z_+,z_-]\otimes \Lambda(\theta_1,\theta_2,\theta_3)
\].
For this algebra, three themes emerge.

The first theme is a super-Chevalley obstruction. The ordinary Chevalley
restriction theorem has commuting-scheme analogues for classical groups, but
the naive \(3|2\) super analogue already fails for \(\mathfrak{so}_7\), and
the obstruction is the explicit non-Cartan class.

The second theme is the appearance of explicit fortuitous classes. The same
algebra \(A\) produces such classes first for \(\mathfrak{sl}_2\) and then
for \(\mathfrak{so}_7\), giving concrete finite-rank counterexamples to naive
stable-image expectations suggested by the type-\(A\)
Loday--Quillen--Tsygan theorem and, for the orthogonal and symplectic families,
by the graded dihedral stable picture of Remark~\ref{rem:osp-dihedral-caveat}.

The third theme is a Langlands-duality mismatch together with a conjectural
repair. The classical relative cohomologies for the Langlands-dual pair
\((\mathfrak{so}_7,\mathfrak{sp}_6)\) are not isomorphic, but recent results
suggest that the \(\mathfrak{so}_7\) mismatch may disappear after a quantum
deformation of the differential, as already indicated by the first relation
\(Q_1\Xi_f^{\mathfrak{so}_7}=\Xi_{nc}^{\mathfrak{so}_7}+Q_0\Lambda\).

From a mathematical point of view, natural concrete problems are to understand
the fortuitous classes more conceptually inside the relative Lie algebra
complex,
to give a direct algebro-geometric proof of the established failure of
\(\operatorname{res}_{\mathfrak g,3|2}\) for \(\mathfrak g=\mathfrak{so}_7\),
and to formulate or construct the conjectural deformation
\(d_{\mathrm{quant}}=d+\hbar d_1+\cdots\) without appealing to the physical
derivation. Independently of the motivating physics conjectures, these
cohomological statements are already explicit and nontrivial enough to merit
further study in the language of Lie algebra cohomology, invariant theory, and
commuting-scheme geometry.

\section*{Acknowledgements}
\addcontentsline{toc}{section}{Acknowledgements}
I thank Ying-Hsuan Lin for collaboration on the high-energy physics work that
provided important motivation and context for this paper, and Penghui Li for
helpful discussions. I am supported by NSFC Grant No.~12575075.

\end{document}